\newlist{abbrv}{itemize}{1}
\setlist[abbrv,1]{label=,labelwidth=1in,align=parleft,itemsep=0.1\baselineskip,leftmargin=!}
\newtheorem{thm}{Theorem}
\newtheorem{cor}{Corollary}[section]
\newtheorem{lem}{Lemma}[section]
\newtheorem{defi}{Definition}
\newtheorem{rem}{Remark}
\newcommand{\abs}[1]{\ensuremath{\left\vert#1\right\vert}}
\newcommand{\p}{\mathcal{P}}
\newcommand{\R}{\mathbb{R}}
\newcommand{\N}{\mathbb{N}}
\newcommand{\Z}{\mathbb{Z}}
\title{A survey on M.\@ B. Levin's proofs for the exact lower discrepancy bounds of special sequences and point sets}
\author{Lisa Kaltenb\"{o}ck \and Wolfgang Stockinger \footnote{The authors are supported by the Austrian Science Fund (FWF), Project F5507-N26, which is a part of the Special Research Program “Quasi-Monte Carlo Methods: Theory and Applications”.}}
\date{}
\begin{document}
\maketitle
\begin{abstract}
The goal of this overview article is to give a tangible presentation of the breakthrough works in discrepancy theory \cite{not1,not3} by M.\@ B. Levin. These works provide proofs for the exact lower discrepancy bounds of Halton's sequence and a certain class of $(t,s)$-sequences. Our survey aims at highlighting the major ideas of the proofs and we discuss further implications of the employed methods. Moreover, we derive extensions of Levin's results.  
\end{abstract} 
\section{Introduction and statement of main results}
In \cite{not1} and \cite{not3} M.\@ B. Levin proved optimal lower discrepancy bounds for certain shifted $(t,m,s)$-nets and for the $s$-dimensional Halton sequence. The main ideas of these proofs are also basis for later, even deeper works of Levin on this topic, see \cite{not2,not5}. However, these papers will not be discussed in our survey. In \cite{not1} and \cite{not3} Levin showed the subsequent Theorems 1 and 2, which we will state below in a simplified version. We start with fixing the notation for basic quantities and concepts, which will be needed for the formulation of Levin's results and of our extensions. \\ \\ Let $(\boldsymbol{x}_n)_{n \in \mathbb{N}}$ be an infinite sequence in the $s$-dimensional unit cube $[0,1)^s$, 
\begin{equation*}
\boldsymbol{y} =(y^{(1)}, \ldots, y^{(s)}),
\end{equation*}
and 
\begin{equation*}
[\boldsymbol{0},\boldsymbol{y}) = [0,y^{(1)}) \times \ldots \times  [0,y^{(s)}) \subseteq [0,1)^s.
\end{equation*}
We call $\Delta(\cdot, (\boldsymbol{x}_n)_{n=1}^{N}): [0,1]^s \to \mathbb{R}$, 
\begin{equation*}
\Delta(\boldsymbol{y},(\boldsymbol{x}_n)_{n=1}^{N}) = \sum_{n=1}^{N} (\strut\chi_{[\boldsymbol{0},\boldsymbol{y})}(\boldsymbol{x}_n) - y^{(1)} \cdots y^{(s)} ),
\end{equation*} 
the discrepancy function of the sequence $(\boldsymbol{x}_n)_{n \in \mathbb{N}}$. We define the star-discrepancy of an $N$-point set $(\boldsymbol{x}_n)_{n=1}^{N}$ as
\begin{equation*}
D^{*}((\boldsymbol{x}_n)_{n=1}^N)= \sup_{\boldsymbol{y} \in [0,1)^s} \abs{\frac{1}{N} \Delta(\boldsymbol{y},(\boldsymbol{x}_n)_{n=1}^{N})}.
\end{equation*}
Further, we need the definition of a $(t,m,s)$-net in base $b$ introduced by H.\ Niederreiter \cite{not7} and the so-called $d$-admissibility property of nets. 
\begin{defi}
For integers $b \geq 2$, $s \geq 1$, $m$ and $t$, with $0 \leq t \leq m$, a \textit{$(t,m,s)$-net in base $b$} is defined as a set of points $\mathcal{P} = \lbrace \boldsymbol{x}_0, \ldots, \boldsymbol{x}_{b^m-1} \rbrace$ in $[0,1)^s$, which satisfies the condition that every interval with volume $b^{-m+t}$ of the form $\mathcal{J}=\prod_{i=1}^s \big[\frac{a_i}{b^{d_i}},\frac{a_i+1}{b^{d_i}} \big)$, with $d_i \in \mathbb{N}_0$, $a_i \in \lbrace 0,1, \ldots, b^{d_i} -1 \rbrace$, for $i=1, \ldots, s$, contains exactly $b^t$ points of $\mathcal{P}$. We will call these intervals $\mathcal{J}$ elementary intervals. 
\end{defi}
\begin{defi}
For $x = \sum_{i \geq 1} \frac{x_i}{b^i}$, where $x_i \in \{0,1,...,b-1\}$ and $m \in \N$, the truncation is defined as
\begin{equation*}
[x]_m = \sum_{i=1}^{m}\frac{x_i}{b^i}.
\end{equation*}
For $\bm{x} = (x^{(1)},...,x^{(s)})$ the truncation is defined as $[\bm{x}]_m = ([x^{(1)}]_m,...,[x^{(s)}]_m)$. Moreover, we define $[x]_0 :=0$.
\end{defi}
Keep in mind that for an arbitrary number $x \in \R$, $[x]$ denotes the integer part of $x$.
For the next definition recall the concept of the digital shift. For a point $x = \sum_{i \geq 1} \frac{x_i}{b^i}$ and a shift $\sigma = \sum_{i \geq 1} \frac{\sigma_i}{b^i}$ we have that 	
\begin{equation*}
x \oplus \sigma := \sum_{i \geq 1} \frac{y_i}{b^i}, \qquad \text{ where } \qquad y_i \equiv x_i + \sigma_i \mod{b}
\end{equation*}	
and analogously	
\begin{equation*}
	x \ominus \sigma := \sum_{i \geq 1} \frac{y_i}{b^i}, \qquad \text{ where } \qquad y_i \equiv x_i - \sigma_i \mod{b}.
\end{equation*}	
For $\bm{x} = (x^{(1)},...,x^{(s)})$ and $\bm{\sigma} = (\sigma^{(1)},...,\sigma^{(s)})$ the $b$-adic digitally shifted point is defined by $\bm{x} \oplus \bm{\sigma} = 
(x^{(1)} \oplus \sigma^{(1)},...,x^{(s)} \oplus \sigma^{(s)})$. Analogously we define $\bm{x} \ominus \bm{\sigma}$.
\begin{defi}
For $x = \sum_{i \geq 1} \frac{x_i}{b^i}$, where $x_i = 0$ for $i = 1,...,k$ and $x_{k+1} \neq 0$, the absolute valuation of $x$ is defined as
\begin{equation*}
\|x\|_b = \frac{1}{b^{k+1}}.
\end{equation*}
For $\bm{x} = (x^{(1)},...,x^{(s)})$ the absolute valuation is defined as $\|\bm{x}\|_b := \prod_{j=1}^{s}\|x^{(j)}\|_b$.
\end{defi}
With this definition we can introduce point sets with a special property which is essential for the further considerations of this chapter.
\begin{defi}
For an integer $d$, we say that a point set $\p = \{\bm{x}_0,...,\bm{x}_{b^m-1}\}$ in $[0,1)^s$ is $d$-admissible in base $b$ if
\begin{equation*}
\min_{0 \leq k <n <b^m} \|\bm{x}_n \ominus \bm{x}_k\|_b > \frac{1}{b^{m+d}}.
\end{equation*}
\end{defi}
We remind the definition of the Halton sequence in bases $b_1, \ldots, b_s$, where $s \geq 1$. Throughout this survey all occurring bases $b_1, \ldots, b_s$, are assumed to be pairwise coprime integers.
\begin{defi}
Let $b_1, \ldots, b_s, \ b_i \geq 2$ $(i=1, \ldots, s)$, for some dimension $s \geq 1$, be integers. Then the \textit{$s$-dimensional Halton sequence in bases $b_1, \ldots, b_s$}, denoted by $(H_s(n))_{n \in \mathbb{N}_0}$, is defined as  
\begin{equation*}
H_s(n):= (\phi_{b_1}(n), \ldots, \phi_{b_s}(n)), \quad n=0,1,\ldots, 
\end{equation*} where $\phi_{b_i}$ denotes the radical inverse function in base $b_i$, i.e, the function $\phi_{b_i} : \mathbb{N}_0 \to [0,1)$, defined as
\begin{equation*}
\phi_{b_i}(n):= \sum_{j=0}^{\infty} n_j b_{i}^{-j-1}, 
\end{equation*}
where $n= n_0 + n_1b_i + n_2 b_{i}^2 + \ldots, \text{ with } n_0, n_1, n_2, \ldots \in \lbrace 0,1, \ldots, b_i-1 \rbrace$. 
\end{defi}
It is well known in discrepancy theory that the Halton sequence (requiring that the underlying bases are pairwise coprime) is a low discrepancy sequence, i.e., the star-discrepancy is of order $\mathcal{O}\big(\frac{(\log N)^s}{N}\big)$ (see, e.g., \cite{not4}). Succeeding in showing that the discrepancy of the Halton sequence satisfies $D^{*}((H_s(n))_{n=1}^{N}) \geq c_s \frac{(\log N)^s}{N}$, for infinitely many $N$, with a constant $c_s >0$, would prove that this order is exact. \\ \\
For $(t,m,s)$-nets in base $b$, denoted by $\mathcal{P}$, we know that their discrepancy always satisfies $D^{*}(\mathcal{P}) \leq c_{s,b} b^{t} \frac{(\log N)^{s-1}}{N}$. We will show that the order $\mathcal{O}\big(\frac{(\log N)^{s-1}}{N}\big)$ is exact for certain $(t,m,s)$-nets. 
\\ \\ 
Now, we can state Levin's main results from \cite{not1} and \cite{not3} (in a simplified form).     
\begin{thm}
Let $s\geq 2, d \geq 1, m \geq 9(d+t)(s-1)^2$ and let $(\boldsymbol{x}_n)_{0 \leq n < b^m}$ be a $d$-admissible $(t,m,s)$-net in base $b$. Then, we can provide an explicitly given $\bm{w}$ such that 
\begin{equation*}
b^m D^{*}((\bm{x}_n \oplus \bm{w})_{0 \leq n < b^m}) \geq \frac{(4(d+t)(s-1)^2)^{-s+1}}{b^d} m^{s-1}.
\end{equation*}
In particular, we have 
\begin{equation*}
D^{*}((\bm{x}_n \oplus \bm{w})_{0 \leq n < N}) \geq c_{s,d} \frac{(\log N)^{s-1}}{N}, 
\end{equation*} 
with a constant $c_{s,d} > 0$ and $N=b^m$.
\end{thm}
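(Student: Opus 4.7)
The plan is to construct, for the given $d$-admissible $(t,m,s)$-net $(\bm{x}_n)$, an explicit shift $\bm{w}$ together with an explicit test point $\bm{y}^* \in [0,1)^s$ for which $|\Delta(\bm{y}^*, (\bm{x}_n \oplus \bm{w})_{n=0}^{b^m-1})|$ attains the claimed order $m^{s-1} b^{-d}$ up to the stated constant $(4(d+t)(s-1)^2)^{-(s-1)}$. Writing $\kappa := 4(d+t)(s-1)^2$, the exponent $s-1$ already indicates that $\bm{y}^*$ should be assembled from $s-1$ independent $b$-adic resolution parameters, so that the discrepancy stratifies in a manner reminiscent of Roth's classical scale decomposition for $L^2$-discrepancy, but carried out here at the level of counts rather than integrals.

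First, I would select the scales. The hypothesis $m \geq 9(d+t)(s-1)^2 > (s-1)\kappa$ leaves room to fix integers $0 = e_0 < e_1 < \cdots < e_{s-1} \leq m$ with consecutive gaps $e_j - e_{j-1} \geq \kappa/(s-1) = 4(d+t)(s-1)$. Along the first $s-1$ coordinates, $\bm{y}^*$ is then defined by letting its $j$-th component be a $b$-adic rational whose last nonzero digit sits at position $e_j$, while the last component is taken close to $1$. The shift $\bm{w}$ is selected so that its first $d+t$ digits in each coordinate cancel those of a fixed reference point $\bm{x}_{n_0}$ of the net, pinning $\bm{x}_{n_0} \oplus \bm{w}$ at a prescribed location near the origin and guaranteeing that this reference point is counted in $[\bm{0},\bm{y}^*)$ for every resolution profile under consideration.

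Second, I would carry out the counting. Using the $(t,m,s)$-net property, the number of shifted points in any bulk elementary subregion of $[\bm{0},\bm{y}^*)$ matches the expected value exactly, so those contributions to $\Delta$ cancel. The surviving contribution accumulates in a thin $b$-adic boundary strip of scale finer than $b^{t-m}$. Here the $d$-admissibility is essential: it forces the shifted points to occupy pairwise distinct $b^{-(m+d)}$-boxes, so that the excess count in the strip can be lower bounded by $1/b^d$ per admissible resolution profile. Summing over the roughly $(m/\kappa)^{s-1}$ profiles compatible with the scale spacing then yields the claimed $(4(d+t)(s-1)^2)^{-(s-1)} b^{-d} m^{s-1}$ lower bound on $|\Delta(\bm{y}^*,\cdot)|$.

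The principal obstacle is the alignment step: showing that the boundary contributions at all $(m/\kappa)^{s-1}$ resolution profiles combine with matching sign rather than cancel against one another. Without $d$-admissibility the shifted net could align with the scale boundaries in an oscillatory fashion and the bound would collapse; the role of admissibility is precisely to preclude this by pinning each point into its own fine $b^{-(m+d)}$-cell. Orchestrating $\bm{w}$ to ensure coherent signs across all profiles, together with the digit-level bookkeeping required to control finite-precision effects at the interface between elementary-interval scale and admissibility scale, accounts both for the explicit constant $4(d+t)(s-1)^2$ and for the quadratic lower bound $m \geq 9(d+t)(s-1)^2$ in the hypothesis.
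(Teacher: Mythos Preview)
Your skeleton is right --- shift to pin a reference point, use the net property to kill coarse cells, and harvest $\sim (m/\kappa)^{s-1}$ fine cells --- but the central mechanism is misidentified, and this gap would prevent the argument from closing.

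The role of $d$-admissibility is not that ``points occupy pairwise distinct $b^{-(m+d)}$-boxes'' in a way that produces an \emph{excess} count, nor is there any sign-alignment problem to orchestrate. In the actual argument, the shift $\bm{w}$ is chosen so that $\bm{x}_{n_0}\oplus\bm{w}$ lands \emph{exactly} at the test corner $\bm{\gamma}$ (all relevant digits matched, not just the first $d+t$; and at the corner $\bm{\gamma}$, not ``near the origin''). The box $[\bm{0},\bm{\gamma})$ is partitioned into elementary intervals $J_{\bm{r}}$ indexed by tuples $\bm{r}=(r_1,\ldots,r_s)$ of digit positions. For every $\bm{r}$ with $\sum_j r_j \geq m+d$, any point of the shifted net lying in $J_{\bm{r}}$ would agree with $\bm{\gamma}=\bm{x}_{n_0}\oplus\bm{w}$ in the first $r_j-1$ digits of each coordinate, hence satisfy $\|\,(\bm{x}_k\oplus\bm{w})\ominus(\bm{x}_{n_0}\oplus\bm{w})\,\|_b \leq b^{-(m+d)}$, contradicting $d$-admissibility. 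So these cells are \emph{empty}: each contributes $-\lambda_s(J_{\bm{r}})$ to the discrepancy, and all contributions carry the same negative sign automatically. There is no oscillation to suppress and nothing for $\bm{w}$ to ``orchestrate'' beyond pinning the single reference point.

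Your description of $\bm{y}^*$ is also off. The test point $\bm{\gamma}$ does not have one distinguished digit per coordinate at positions $e_1,\ldots,e_{s-1}$; rather, each $\gamma^{(j)}$ carries \emph{many} nonzero digits (on the order of $m/(s\cdot\text{const})$), spaced so that the intermediate range $m < \sum_j r_j < m+d$ (where the net property no longer gives exact balance but admissibility does not yet force emptiness) is avoided or controllably small. The $(m/\kappa)^{s-1}$ ``profiles'' are then the tuples $\bm{r}$ arising in the partition of a single fixed $[\bm{0},\bm{\gamma})$, not different choices of test point. Once you see this, the constants $4(d+t)(s-1)^2$ and $9(d+t)(s-1)^2$ come out of the digit-spacing combinatorics directly, with no delicate sign bookkeeping.
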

\begin{thm}
Put $B = b_1 \cdots b_s$, $s \geq 2$ and $m_0= \lfloor 2 B \log_2 B  \rfloor +2$, then the estimate for the star-discrepancy of the Halton sequence
\begin{equation*}
\sup_{1 \leq N \leq 2^{mm_0}} N D^{*}((H_s(n))_{n=1}^{N}) \geq m^s (8 B)^{-1},
\end{equation*} is valid for $m \geq B$.
In particular, there exists some constant $c_s > 0$, such that 
\begin{equation*}
D^{*}((H_s(n))_{n=1}^{N}) \geq c_s \frac{(\log N)^{s}}{N}, \text{ for infinitely many } N  \in \mathbb{N}.
\end{equation*}   
The implied constant $c_s$ also depends on the bases but not on $N$.
\end{thm}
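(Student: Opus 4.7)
The plan is to exploit the digital structure of the radical inverse functions $\phi_{b_i}$ together with the pairwise coprimality of the bases $b_1,\dots,b_s$, in a spirit parallel to the proof of Theorem 1. Although the Halton sequence is not itself a $(t,m,s)$-net, for $N$ compatible with the bases its initial segment enjoys approximate equidistribution in every axis-parallel elementary box via the Chinese Remainder Theorem, and this is what permits a net-style argument to be adapted. The gain of the extra factor of $\log N$ compared with Theorem 1 will come from the fact that we now have $s$ independent ``digital levels'' to sum over rather than $s-1$.

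The concrete first step is the following arithmetic observation: for an elementary box $J = \prod_{i=1}^s [0, b_i^{-k_i})$ one has $H_s(n)\in J$ iff $n\equiv 0 \pmod{b_i^{k_i}}$ for each $i$, and by coprimality this is equivalent to $n \equiv 0 \pmod{b_1^{k_1}\cdots b_s^{k_s}}$. Consequently, the number of indices $n\le N$ with $H_s(n)\in J$ agrees, up to an error of at most $1$, with the expected count $N\cdot\mathrm{vol}(J)$. Next I would construct an explicit test point $\bm y\in[0,1)^s$ whose $i$-th coordinate has a prescribed base-$b_i$ expansion of length roughly $m$, and write $[\bm 0,\bm y)$ as a disjoint union, indexed by tuples $(k_1,\dots,k_s)$ with $\sum_i k_i\lesssim m$, of elementary boxes of the above form. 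The discrepancy $\Delta(\bm y,(H_s(n))_{n=1}^N)$ then splits as a sum of $\Omega(m^s)$ local contributions, each of size $\Omega(1)$.

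The decisive step is then to choose $N$ in the allowed range $1\le N\le 2^{mm_0}$ so that a definite proportion of these local contributions share the same sign (or at least do not cancel heavily); this is the multidimensional analogue of the classical one-dimensional argument for $\phi_b$, where one picks $N$ whose base-$b$ digits produce a saturated carry pattern. To perform this alignment simultaneously in $s$ pairwise coprime bases, the residue of $N$ modulo $b_1^{m}\cdots b_s^{m}=B^m$ must realize the desired digit pattern in every direction at once; the Chinese Remainder Theorem guarantees some $N\le B^m$ doing so, and the definition $m_0=\lfloor 2B\log_2 B\rfloor+2$ is chosen (comfortably) so that $B^m\le 2^{mm_0}$, placing $N$ in the permitted range with enough slack for the auxiliary error terms. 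Putting the steps together yields $ND^*((H_s(n))_{n=1}^N)\ge m^s(8B)^{-1}$, and the asymptotic form $D^*((H_s(n))_{n=1}^N)\ge c_s(\log N)^s/N$ for infinitely many $N$ follows on letting $m\to\infty$ and using $\log N\le mm_0\log 2$.

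The hard part will be the sign-alignment step. Unlike the one-dimensional case, where one only needs to arrange the digits of $N$ in a single base, here the contributions live at all tuples of scales $(k_1,\dots,k_s)$ and must be coherent under the coupled residue constraints imposed by all $s$ coprime bases simultaneously. Bookkeeping the cross-cancellations between local contributions at different scales (and showing that only an acceptable fraction of them can cancel) is what forces the multiplicative dependence on $B=b_1\cdots b_s$ visible in both the constant $(8B)^{-1}$ and the exponent $m_0\sim B\log B$, and is the essential technical difficulty of the proof.
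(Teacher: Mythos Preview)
Your proposal correctly identifies the broad architecture---a CRT-based partition of a test box $[\bm 0,\bm y)$ into elementary boxes indexed by scale tuples $(k_1,\dots,k_s)$---but it misses the two specific ideas that actually make the argument go through, and your proposed ``decisive step'' is not the one the paper uses.

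First, the paper does \emph{not} construct $N$ explicitly. Instead it averages the discrepancy function over all $N$ in a range of length $B_{\bm\tau m}$: setting
\[
\alpha_m=\frac{1}{B_{\bm\tau m}}\sum_{N=1}^{B_{\bm\tau m}}\Delta\bigl(\bm y,(H_s(n))_{n=\tilde y_m}^{\tilde y_m+N-1}\bigr),
\]
one computes that each box $P_{\bm k}$ contributes exactly $\tfrac12-\tfrac{A_{\bm k}}{B_{\bm\tau\cdot\bm k}}-\tfrac{1}{2B_{\bm\tau\cdot\bm k}}$ to $\alpha_m$, where $A_{\bm k}$ is a certain least residue modulo $B_{\bm\tau\cdot\bm k}$. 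A large $|\alpha_m|$ then forces the existence of a good $N$; no sign alignment for a particular $N$ is ever attempted. Your proposed direct construction of $N$ via CRT would have to control, for every tuple $\bm k$, whether the residue of $N$ modulo $\prod_i b_i^{k_i}$ falls in a prescribed interval, and these constraints across different $\bm k$ are coupled in a way that is not obviously satisfiable; this is precisely the difficulty the averaging sidesteps.

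Second, and more subtly, the test point is not an arbitrary point with $m$ base-$b_i$ digits in each coordinate. One takes $y_i=\sum_{j=1}^m b_i^{-j\tau_i}$ with the spacing $\tau_i=\min\{k\ge 1: b_i^{k}\equiv 1 \pmod{B/b_i}\}$. This choice is what forces $A_{\bm k}/B_{\bm\tau\cdot\bm k}$ to be a \emph{constant independent of $\bm k$} (in the worked example with $b_1=2$, $b_2=3$ it equals $1/6$), so that the $m^s$ averaged contributions all equal the same nonzero number and there is nothing to cancel. Without the $\tau_i$, the fractions $A_{\bm k}/B_{\bm k}$ vary with $\bm k$ and the sum need not be of order $m^s$. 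So the ``hard part'' you anticipate---preventing cross-scale cancellation---is dissolved not by a clever choice of $N$ but by a clever choice of the digit spacings $\tau_i$ in $\bm y$ combined with the averaging over $N$.
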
  
The aim of this paper is two-fold. \textbf{First}, we will give an easier and simpler access to the ideas of Levin. To this end, we are eager to give a clear and illustrative re-proof of Theorems 1 and 2. We use absolutely the same ideas as Levin, but focus on a clearer presentation. To achieve this goal, we restrict the re-proof of Theorem 1 to the two-dimensional case and carry out the steps in detail. For this case of course, the exact lower discrepancy bound follows (for an arbitrary $\bm{w}$) by the general lower bound for the discrepancy of two-dimensional point sets by W.\ M. Schmidt \cite{not6}. For simplicity we will also restrict ourselves to base $b = 2$. Moreover, we focus on the optimal quality parameter $t = 0$ and for ease of presentation we formulate and prove the result for $m \equiv 0 \mod 4$. We also state the result without the shift and require a certain condition on $\bm{x}_0$ instead. (The ideas for the proof in the general case are the same as in this special version.) This gives Theorem 3:
\begin{thm}
\label{lma:levin1}
Let $(\bm{x}_n)_{0 \leq n < 2^m}$ be a $(0,m,2)$-net in base $2$ with $m \geq 4$, $m \equiv 0 \mod 4$ and $\bm{x}_0 = \bm{\gamma} = (\gamma^{(1)},\gamma^{(2)})$,
\begin{equation*}
\label{ass:1-gamma}
\begin{split}
&\gamma^{(1)} = \frac{1}{2^2} + \frac{1}{2^4}+ \cdots + \frac{1}{2^{m/2}},\\
&\gamma^{(2)} = \frac{1}{2^{m/2+2}} + \frac{1}{2^{m/2+4}} + \cdots + \frac{1}{2^m}.
\end{split}
\end{equation*}
Then it holds for the interval $J_{\bm{\gamma}} = [0,\gamma^{(1)}) \times [0,\gamma^{(2)})$ that
\begin{equation*}
\frac{1}{N} \Delta (\bm{\gamma},(\bm{x}_n)_{0 \leq n < 2^m})
\leq -\frac{1}{4} \frac{1}{2^{m+2}} m,
\end{equation*}
and consequently 
\begin{equation*}
D^{*}((\bm{x}_n)_{0 \leq n < N}) \geq \frac{1}{16 \log 2} \frac{\log N}{N},
\end{equation*}
with $N=2^m$.
\end{thm}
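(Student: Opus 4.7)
The plan is to decompose the corner box $J_{\bm{\gamma}}$ into dyadic sub-rectangles governed by the binary expansions of $\gamma^{(1)}$ and $\gamma^{(2)}$, count the net points in each piece using the $(0,m,2)$-net property together with the hypothesis $\bm{x}_0 = \bm{\gamma}$, and sum the individual contributions to $\Delta(\bm{\gamma})$. Setting $\alpha_k := \sum_{i=1}^{k-1} 2^{-2i}$ and $\beta_l := \sum_{i=1}^{l-1} 2^{-(m/2+2i)}$, the standard layered decomposition along the $1$-positions of $\gamma^{(1)}$ and $\gamma^{(2)}$ gives
\begin{equation*}
J_{\bm{\gamma}} \;=\; \bigsqcup_{k,l=1}^{m/4} R_{k,l}, \qquad R_{k,l} := [\alpha_k, \alpha_k + 2^{-2k}) \times [\beta_l, \beta_l + 2^{-(m/2+2l)}),
\end{equation*}
with $\mathrm{vol}(R_{k,l}) = 2^{-(2k+2l+m/2)}$. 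Call $R_{k,l}$ \emph{large} if $k+l \leq m/4$ (volume $\geq 2^{-m}$) and \emph{small} otherwise. For a large $R_{k,l}$ the net property applies directly: $R_{k,l}$ is a dyadic box with $d_1 + d_2 = 2k + m/2 + 2l \leq m$, hence a disjoint union of $2^{m/2-2k-2l}$ elementary intervals of volume $2^{-m}$, so it contains exactly the expected $2^{m/2-2k-2l}$ net points and contributes nothing to $\Delta(\bm{\gamma})$.

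The heart of the proof is to show that every small rectangle is \emph{empty}. For $(k,l)$ with $k+l > m/4$ I would construct an elementary interval $E_{k,l}$ of volume $2^{-m}$ that contains both $R_{k,l}$ and the point $\bm{\gamma}$; the $(0,m,2)$-property then forces the unique net point in $E_{k,l}$ to be $\bm{x}_0 = \bm{\gamma}$, and since $\bm{\gamma} \notin J_{\bm{\gamma}}$ the rectangle $R_{k,l} \subseteq J_{\bm{\gamma}}$ must contain no net point at all. A natural candidate is
\begin{equation*}
E_{k,l} = \bigl[\alpha_{m/4-l+1}, \alpha_{m/4-l+1} + 2^{-(m/2-2l+1)}\bigr) \times \bigl[\beta_l, \beta_l + 2^{-(m/2+2l-1)}\bigr),
\end{equation*}
with dyadic denominators $d_1 = m/2-2l+1$ and $d_2 = m/2+2l-1$ summing to $m$. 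Its correctness rests on the geometric-tail estimate $\sum_{i \geq j} 2^{-2i} < \tfrac{4}{3} \cdot 2^{-2j} < 2 \cdot 2^{-2j}$, applied to $\gamma^{(1)} - \alpha_{m/4-l+1}$, to $\gamma^{(2)} - \beta_l$, and to the enlargement of $R_{k,l}$ in each coordinate, together with the identities $[\gamma^{(1)}]_{d_1} = \alpha_{m/4-l+1}$ and $[\gamma^{(2)}]_{d_2} = \beta_l$, which pin $E_{k,l}$ to the dyadic grid and are immediate from the alternating form of the two expansions. Each small $R_{k,l}$ then contributes exactly $-N \cdot \mathrm{vol}(R_{k,l}) = -2^{m/2-2k-2l}$ to $\Delta(\bm{\gamma})$.

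Summing all contributions yields $\Delta(\bm{\gamma}) = -\sum_{k+l > m/4} 2^{m/2-2k-2l}$. Isolating the ``diagonal layer'' $k+l = m/4+1$ alone, which contains exactly $m/4$ pairs each carrying weight $2^{-2}$, already gives $|\Delta(\bm{\gamma})| \geq m/16$; after division by $N = 2^m$ this is the claimed bound $\frac{1}{N}\Delta(\bm{\gamma}) \leq -\frac{m}{4 \cdot 2^{m+2}}$, and the star-discrepancy estimate follows from $N D^{*} \geq |\Delta(\bm{\gamma})|$ together with $m = \log N / \log 2$. I expect the main obstacle to lie in producing the enclosing elementary interval $E_{k,l}$ and simultaneously verifying its two defining properties (enclosure of the corner point and enclosure of the entire small sub-rectangle). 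This step is precisely where the peculiar ``sparse-alternating'' shape of $\gamma^{(1)}$ and $\gamma^{(2)}$ becomes essential: a more generic choice of endpoint would make either the truncation identity or the tail bound fail, breaking the $\bm{\gamma}$-blocking mechanism that empties the small rectangles.
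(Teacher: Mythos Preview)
Your proof is correct and follows the same overall plan as the paper: identical dyadic decomposition $J_{\bm{\gamma}}=\bigsqcup R_{k,l}$, identical large/small split (the paper phrases it via $r_1+r_2\lessgtr m$ and checks separately that $r_1+r_2=m+1$ is impossible by parity, which your dichotomy absorbs since $2k+m/2+2l$ is always even), and identical diagonal-layer count $|A_4|=m/4$ at the end. The one substantive difference lies in the emptiness argument for the small rectangles. The paper invokes $2$-admissibility (its Lemma~2.1): if some $\bm{x}_k$ lay in $J_{\bm{r},\bm{\gamma}}$ with $r_1+r_2\ge m+2$, then the first $r_i-1$ digits of $x_k^{(i)}$ and $\gamma^{(i)}$ agree, forcing $\|\bm{x}_k\ominus\bm{x}_0\|_2\le 2^{-(m+2)}$, a contradiction. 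You instead build an explicit elementary interval $E_{k,l}$ of volume $2^{-m}$ containing both $R_{k,l}$ and $\bm{\gamma}$, so the unique net point there is $\bm{x}_0=\bm{\gamma}\notin J_{\bm{\gamma}}\supseteq R_{k,l}$. Your route is more elementary---it needs only the geometric-tail bound and the truncation identities $[\gamma^{(1)}]_{d_1}=\alpha_{m/4-l+1}$, $[\gamma^{(2)}]_{d_2}=\beta_l$, all of which check out---and avoids the separate admissibility lemma; the paper's route via the valuation $\|\cdot\|_b$ is slightly cleaner and, more importantly, is precisely the mechanism that generalizes to $d$-admissible $(t,m,s)$-nets in the full Theorem~1 and in Lemma~3.1.
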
 
The \textbf{second aim} is to give a - in a certain sense - quantitative extension of Theorems 1 and 2. We will show: 
\begin{thm}
\label{thm:epsilon-umgebung-allgemein}
Let $m \geq 2s^s(s-1)^s$. Then, there is a set $\Gamma \subseteq [0,1)^s$, $s\geq 2$, with the following properties: 
\begin{itemize}
\item $\text{For all } \boldsymbol{x} \in [0,1)^s$ there exists a $\boldsymbol{\gamma} \in \Gamma$ with 
\begin{equation*}
\| \boldsymbol{x} - \boldsymbol{\gamma} \| < b\sqrt{s} \frac{1}{b^{ \frac{m}{2(s-1)s}}}.
\end{equation*}
Here, $\| \cdot \|$ denotes the euclidean norm. \\
\item If $ \mathcal{P} = \lbrace \bm{x}_0, \ldots, \bm{x}_{b^m -1} \rbrace$ is a $(0,m,s)$-net in base b, and if $\boldsymbol{x}_i \in \Gamma$ for some $i \in \lbrace 0, \ldots, b^m-1 \rbrace$, then, with $N=b^m$, 
\begin{equation*}
D^{*}(\mathcal{P}) \geq \frac{(b-1)^s(2s-3)^{s-1}}{b^s(4s^2(s-1)^2 \log b)^{s-1}} \frac{(\log N)^{s-1}}{N}.
\end{equation*} 
\end{itemize}
\end{thm}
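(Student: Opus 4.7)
The plan is to construct $\Gamma$ as a parameterized family of points generalizing the single ``bad corner'' $\bm\gamma$ of Theorem \ref{lma:levin1} to arbitrary dimension $s$ and base $b$, and then verify the two listed properties in turn. Setting $k := \lceil m/(2(s-1)s)\rceil$, I would divide the digit positions $\{k+1,\ldots,m\}$ into $s$ consecutive blocks $B_1,\ldots,B_s$ of length $(m-k)/s$. For every tuple $\bm{a} = (a_{j,i})_{1\le j\le s,\,1\le i\le k} \in \{0,\ldots,b-1\}^{ks}$ I define a point $\bm\gamma(\bm{a})\in[0,1)^s$ whose $j$-th coordinate carries the prescribed ``free'' digits $a_{j,1},\ldots,a_{j,k}$ in positions $1,\ldots,k$, a fixed sparse nonzero pattern (in the style of $\tfrac{1}{b^2}+\tfrac{1}{b^4}+\cdots$) inside the block $B_j$, and zeros everywhere else. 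The set $\Gamma$ collects all such $\bm\gamma(\bm{a})$.

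The first bullet is then immediate: given $\bm{x}\in[0,1)^s$, choose each $a_{j,i}$ to be the $i$-th base-$b$ digit of $x^{(j)}$, so that $x^{(j)}$ and $\gamma^{(j)}$ share their first $k$ digits. The residual error is bounded by the tail of $x^{(j)}$ (at most $b^{-k}$) together with the magnitude of the bad pattern (a geometric series whose mass is at most $b^{-k}$), yielding $|x^{(j)}-\gamma^{(j)}|<b\cdot b^{-k}\le b\cdot b^{-m/(2(s-1)s)}$. The Pythagorean sum over the $s$ coordinates then produces the stated Euclidean bound.

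The core of the argument is the second bullet, which I would handle following the Levin strategy behind Theorem \ref{lma:levin1}. Given a $(0,m,s)$-net $\mathcal{P}$ containing some $\bm\gamma=\bm\gamma(\bm{a})\in\Gamma$, I would exhibit an anchor $\bm{y}\in[0,1)^s$ (a ``canonical bad corner'' determined by the block pattern of $\bm\gamma$) at which the discrepancy function $\Delta(\bm{y},\mathcal{P})$ is very negative. The box $[\bm{0},\bm{y})$ admits a canonical $b$-adic decomposition into elementary sub-boxes whose structure mirrors the blocks $B_j$; the $(0,m,s)$-net property pins down the exact point count in each piece of volume $\ge b^{-m}$, and the fact that $\bm\gamma$ itself lies in $\mathcal{P}$ forces a systematic deficit between the counting measure and the volume of $[\bm{0},\bm{y})$. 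This deficit accumulates into an $(s-1)$-fold nested sum of size $\asymp m^{s-1}$, and after dividing by $N=b^m$ and substituting $\log N=m\log b$ it yields the explicit constant displayed in the theorem.

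The principal obstacle is the $(s-1)$-fold combinatorial bookkeeping in this last step. In the two-dimensional, base-two setting of Theorem \ref{lma:levin1} the deficit contributions collapse into a single sum of length $\Theta(m)$ that can be evaluated directly; in the general case they form a nested $(s-1)$-fold structure whose individual terms must be summed uniformly over every choice of $\bm{a}$. Recovering the precise constant $(b-1)^s(2s-3)^{s-1}/(b^s(4s^2(s-1)^2\log b)^{s-1})$ requires coordinating the block length $(m-k)/s$, the sparsity of the fixed pattern, and the free-digit parameter $k$. The hypothesis $m\ge 2s^s(s-1)^s$ is precisely what guarantees that each block is long enough to hold both the prescribed pattern and the $\asymp m^{s-1}$ distinct critical elementary boxes that produce the deficit.
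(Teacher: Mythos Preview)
Your density argument for the first bullet is fine and matches the paper. The gap is in the construction itself, and it bites for $s\ge 3$. If the pattern for coordinate $j$ lives in the $j$-th of $s$ \emph{consecutive disjoint} blocks $B_j\subseteq\{k+1,\dots,m\}$ of length $L=(m-k)/s$, then any tuple $(r_1,\dots,r_s)$ taken entirely from the patterns satisfies
\[
\sum_{j=1}^{s} r_j \;\ge\; \sum_{j=1}^{s}\bigl(k+(j-1)L\bigr) \;=\; sk+\frac{(m-k)(s-1)}{2},
\]
which for $s\ge 3$ and $k\approx m/(2s(s-1))$ is at least $7m/6$ and grows like $m(s-1)/2$. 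Hence there are \emph{no} pure-pattern tuples with $\sum_j r_j = m+s$, and the mixed tuples (some $r_j\le k$) contribute only $O(m^{s-2})$ solutions, not the $\asymp m^{s-1}$ you need. The ``deficit'' you describe is then $O(m^{s-2}/b^m)$, one power of $\log N$ short. Relatedly, your outline never addresses the intermediate set $A_2=\{\bm r:\ m+1\le\sum r_j<m+s\}$, whose contribution can have either sign and must be shown to be dominated by the $A_3$-deficit; with your blocks there is no mechanism forcing $|A_2|$ to be small.

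The paper does \emph{not} use disjoint blocks. It proves a general lemma (Lemma~\ref{lma:levin5}) isolating exactly the two counting conditions on the digit-index sets $R_j$ that are needed, and then builds $\Gamma$ so that these hold regardless of the free prefix: for $j=1,\dots,s-1$ the pattern positions are all the \emph{same} arithmetic progression $T_j=\{k+s,\,k+2s,\dots,k+s\bar m\}$ with $\bar m=\bigl[m(2s-3)/(2s^2(s-1))\bigr]$, and only $T_s$ is shifted, via $t_s=m-(s-1)(k+s\bar m)+sj_s$. This overlap is what allows $\sum_j t_j = m+s(j_1+\cdots+j_s-(s-1)\bar m)$ to hit $m+s$ for $\asymp(\bar m/(s-1))^{s-1}\asymp m^{s-1}$ choices, while simultaneously forcing $\sum_j t_j\equiv m\pmod s$ so that $A_2=\emptyset$. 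Your two-dimensional intuition from Theorem~\ref{lma:levin1} happens to be the unique case where ``disjoint blocks'' and ``overlapping progressions'' coincide; for $s\ge 3$ you must abandon the disjoint-block picture.
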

\begin{thm}
There are constants $c_1$ and $c_2 > 0$, such that for infinitely many $N$ there exists a set $\Lambda_N \subseteq [0,1)^2$ with the following properties: 
\begin{itemize}
\item We have $\lambda_2 (\Lambda_N) \geq c_1$, where $\lambda_2$ denotes the 2-dimensional Lebesgue measure.
\item For all $\boldsymbol{x} \in \Lambda_N$ there exists a $\boldsymbol{y} \in [0,1)^2$ with $\| \boldsymbol{x} - \boldsymbol{y} \| < \sqrt{8} \frac{1}{N^{\frac{1}{14}}}$ and 
\begin{equation*}
\abs{\Delta(\boldsymbol{y}, (H_2(n))_{n=1}^N)} \geq c_2 (\log N)^2.
\end{equation*} 
\end{itemize}
\end{thm}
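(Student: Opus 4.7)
The plan is to carry out in the Halton setting the same kind of quantitative refinement that upgrades Theorem 1 into Theorem 4. Concretely, I would revisit Levin's proof of Theorem 2 and extract from it not just a single ``bad'' point $\boldsymbol{y}_m \in [0,1)^2$, but an entire finite family $\Gamma_N \subseteq [0,1)^2$ of points satisfying $|\Delta(\boldsymbol{y}, (H_2(n))_{n=1}^N)| \geq c_2 (\log N)^2$. Once such a family is available, one takes $\Lambda_N$ to be the $\sqrt{8}/N^{1/14}$-neighbourhood of $\Gamma_N$: the second bullet of the statement is then built into the construction, and only the lower bound $\lambda_2(\Lambda_N) \geq c_1$ remains to be checked.

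First I would isolate the digit-structure of the specific $\boldsymbol{y}_m$ produced by Levin, by analogy with the point $\boldsymbol{\gamma}$ that appears in Theorem 3. Its coordinates are finite $b_i$-adic rationals whose non-zero digits occupy certain prescribed blocks of positions; roughly, the digits in the ``low'' positions are forced by the lower-bound computation (they carry the main term of $\Delta$), whereas the digits in sufficiently ``high'' positions, say beyond some level $k_m$, enter only through error terms. These high-order digits are the free parameters to be exploited. A careful book-keeping through Levin's proof should show that replacing the high-order digits by arbitrary base-$b_i$ digits changes $\Delta(\boldsymbol{y}, (H_2(n))_{n=1}^N)$ by an amount still dominated by, say, $m^2/(16 B)$. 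This produces the desired family $\Gamma_N$, sitting on a rational lattice whose mesh size is of order $b_1^{-k_m} \times b_2^{-k_m}$.

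The choice of the cutoff $k_m$ amounts to balancing two opposing requirements. On one hand, $k_m$ must be small enough that the $\sqrt{8}/N^{1/14}$-fattening of $\Gamma_N$ covers a set of $\lambda_2$-measure at least some absolute constant $c_1$, i.e.\ $b_i^{-k_m} \lesssim N^{-1/14}$. On the other hand, $k_m$ must still be large enough that the cumulative effect of all the free digits on $\Delta$ remains $o(m^2)$. Using $N \leq 2^{m m_0}$ from Theorem 2 and tracking constants, the exponent $1/14$ should emerge as the explicit outcome of this trade-off, while the factor $\sqrt{8}$ reflects the diameter of a square cell of side length $\approx 2/N^{1/14}$. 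A standard volume-counting argument then gives $\lambda_2(\Lambda_N) \geq c_1$, and the bound $N \leq 2^{m m_0}$ translates the $m^2$ from Theorem 2 into the desired $(\log N)^2$.

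The main obstacle, as I see it, lies in the second step: cleanly separating Levin's argument into a ``main contribution'' that depends only on the low-order digits of $\boldsymbol{y}$, and an ``error contribution'' that depends on its high-order digits, together with a quantitative control of the latter. Levin's proof of Theorem 2 is particularly delicate because the estimates for the different coprime bases $b_1, b_2$ are intertwined through a Chinese Remainder Theorem mechanism, and one must verify that this intertwining is preserved uniformly as the free digits range over all admissible combinations. Overcoming this is precisely what is needed to promote an individual lower-bound statement into one about a set of positive Lebesgue measure.
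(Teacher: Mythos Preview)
Your plan has a real gap. Levin's proof of Theorem 2 does not produce, for a given $m$, a pair $(\boldsymbol{y},N)$ with $\abs{\Delta(\boldsymbol{y},(H_2(n))_{n=1}^{N})}\geq c\,m^2$ by direct computation; what it actually controls is the \emph{average}
\[
\alpha_m(\boldsymbol{y})=\frac{1}{B_{\boldsymbol{\tau}m}}\sum_{N=1}^{B_{\boldsymbol{\tau}m}}\Delta\bigl(\boldsymbol{y},(H_2(n))_{n=\tilde y_m}^{\tilde y_m+N-1}\bigr),
\]
shows $\abs{\alpha_m}\geq c\,m^2$, and only then concludes that \emph{some} $N$ works. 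The $N$ that emerges depends on $\boldsymbol{y}$, and nothing guarantees it is the same $N$ for different $\boldsymbol{y}$'s. Your central claim---that perturbing certain digits of $\boldsymbol{y}$ changes $\Delta(\boldsymbol{y},(H_2(n))_{n=1}^{N})$ for a \emph{fixed} $N$ by only $o(m^2)$---is not what the argument delivers: altering a single digit of $\boldsymbol{y}$ modifies $[\boldsymbol{0},\boldsymbol{y})$ by an elementary box whose own discrepancy can be of order $(\log N)^2$, i.e.\ the same order as the lower bound you are trying to preserve.

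The paper's route is different in two essential respects. First (Lemma 5.1), it is the \emph{leading} digits of $\boldsymbol{y}$---not the trailing ones---that are allowed to be arbitrary, while the tail keeps the prescribed pattern $0101\ldots$ resp.\ $111\ldots$; this is what makes the admissible set $\Upsilon$ spread through all of $[0,1)^2$ rather than sit in a single tiny box, and it works because $\alpha_m$ decomposes additively over the elementary pieces and the pieces coming from $l$ modified leading digits contribute only $O(lm)=o(m^2)$. Second, to pass from ``$\abs{\alpha_m(\boldsymbol{y})}\geq c_2 m^2$ for every $\boldsymbol{y}\in\Upsilon$'' to a single $N$ that works for many $\boldsymbol{y}$ simultaneously, the paper invokes two ingredients you do not mention: the \emph{upper} discrepancy bound $\abs{\Delta}\leq c(\log N)^2$ for the Halton sequence, which forces a positive proportion (at least $1-\kappa$ with $\kappa=(c-c_2)/(c-c_2/2)$) of the $N\in[1,B_{\boldsymbol{\tau}m}]$ to satisfy $\abs{\Delta(\boldsymbol{y}_i,\ldots)}\geq(c_2/2)m^2$ for each fixed $\boldsymbol{y}_i\in\Upsilon$; and then a double-counting/pigeonhole over the pairs $(\boldsymbol{y}_i,N)$ to locate one $N_0$ that is good for at least a $(1-\kappa)$-fraction of the grid cells $Q_i$. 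The set $\Lambda_N$ is the union of those cells. Without the averaging--upper-bound--pigeonhole mechanism, there is no way to synchronise the choice of $N$ across the family of $\boldsymbol{y}$'s.
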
   
\begin{rem}
An analogous result can be obtained for arbitrary dimensions. For sake of simplicity our considerations will be restricted to the two-dimensional case. The basic ideas become better visible in this case and can be adopted to higher dimensions in a straightforward manner.   
\end{rem}
The remainder of this paper is organised as follows: In Chapter 2, we will discuss the $d$-admissibility property in more detail. Of course, the proof of Theorem 3 will be the major part of this chapter. We relax some of the conditions of Theorem 3 in Chapter 3 and derive a more general result (Theorem 4). In Chapter 4, we will prove Theorem 2 in detail. Chapter 5 will be solely dedicated to the proof of Theorem 5. 
\section{Remarks on admissibility of nets and Re-proof of Theorem 3}
Before stating the proof of Theorem 3, we discuss the $d$-admissibility property for $(0,m,s)$-nets, since in this theorem we restrict ourselves to the quality parameter $t=0$. 
\begin{lem}
A point set $\p = \{\bm{x}_0,...,\bm{x}_{b^m-1}\}$ in $[0,1)^s$ is $s$-admissible if and only if $\mathcal{P}$ is a $(0,m,s)$-net in base $b$. Moreover, $\mathcal{P}$ cannot be $d$-admissible for $d <s$.
\begin{proof}
Let $\mathcal{P}$ be a $(0,m,s)$-net in base $b$. First, we show that 
\begin{equation*}
\frac{1}{b^{m+s-1}} \geq \min_{0 \leq k <n <b^m} \|\bm{x}_n \ominus \bm{x}_k\|_b,
\end{equation*}
by taking special elementary intervals into account. Since $\p$ is a $(0,m,s)$-net, we know by definition that every elementary interval of order $m$ in base $b$, i.e., every elementary interval with volume $\frac{1}{b^{m}}$, contains exactly one point of $\p$. Therefore, this is also true for intervals of the form		
\begin{equation*}
\left[\frac{k}{b^{m}},\frac{k+1}{b^{m}}\right) \times [0,1)^{s-1}, \qquad k \in \{0,...,b^{m}-1\}.
\end{equation*}		
Now let $\bm{x} = (x^{(1)}, \ldots, x^{(s)})$ be the unique point of $\p$ for which it holds that $x^{(1)} \in \left[0,\frac{1}{b^{m}}\right)$.
Moreover, let $\bm{y} = (y^{(1)}, \ldots, y^{(s)})$ be the point of $\p$ such that $y^{(1)} \in \left[\frac{b-1}{b^{m}},\frac{b}{b^{m}}\right)$. This is equivalent to		
\begin{align*}
0 &\leq x^{(1)} < \frac{1}{b^m} \\
\frac{b-1}{b^m} &\leq y^{(1)} < \frac{1}{b^{m-1}}.
\end{align*}	
Therefore, we know that $x^{(1)}$ and $y^{(1)}$ can be written as	
\begin{align*}
x^{(1)} = \frac{\alpha_{1}}{b^{m+1}} + \frac{\alpha_{2}}{b^{m+2}} +\cdots, \\
y^{(1)} = \frac{b-1}{b^{m}} + \frac{\beta_{1}}{b^{m+1}} + \frac{\beta_{2}}{b^{m+2}} + \cdots,
\end{align*}		
where $\alpha_i, \beta_i \in \{0,1,...,b-1\}$ for $i \geq 1$. Thus, $\|y^{(1)} \ominus x^{(1)}\|_b = \frac{1}{b^{m}}$. Moreover, for $x^{(i)}$ and $y^{(i)}$, $i=2, \ldots, s$, it holds that $\|y^{(i)} \ominus x^{(i)}\|_b \leq \frac{1}{b}$. Therefore, it follows, that
\begin{equation*}
\|\bm{y} \ominus \bm{x} \|_b \leq \frac{1}{b^{m+s-1}}.
\end{equation*}
If we can prove that $\min_{0 \leq k <n <b^m} \|\bm{x}_n \ominus \bm{x}_k\|_b > \frac{1}{b^{m+s}}$, 
then the first implication of the assertion immediately follows. Suppose that there exist points $\bm{x} = (x^{(1)}, \ldots, x^{(s)}), \bm{x} \in \p$ and $\bm{y} = (y^{(1)}, \ldots, y^{(s)}), \bm{y} \in \p$ such that $\|\bm{y} \ominus \bm{x}\|_b \leq \frac{1}{b^{m+s}}$. Then, there exist integers $l^{(1)}, \ldots, l^{(s-1)}$ such that
\begin{equation*}
\| y^{(i)} \ominus x^{(i)} \|_b \leq \frac{1}{b^{l^{(i)}}}, \text{ for } i=1, \ldots, s-1, 
\end{equation*} 
and 
\begin{equation*}
\| y^{(s)} \ominus x^{(s)} \|_b \leq \frac{1}{b^{m+s-l^{(1)}- \ldots - l^{(s-1)}}}.
\end{equation*}
This implies that the first $l^{(i)}-1$ digits of the $b$-adic expansion of $x^{(i)}$ and $y^{(i)}$, $i=1, \ldots, s-1$ are identical. Also, the first $m+s-l^{(1)}- \ldots - l^{(s-1)}-1$ digits of the $b$-adic expansion of $x^{(s)}$ and $y^{(s)}$ are identical. Consequently, $\bm{x}$ and $\bm{y}$ are contained in an elementary interval of volume $\frac{1}{b^m}$. This contradicts our assumption that $\p$ is a $(0,m,s)$-net. \\ \\
Let now $\mathcal{P}$ be an arbitrary $b^m$-point set in $[0,1)^s$ which is not a $(0,m,s)$-net. Then there exists an elementary interval $\mathcal{J}_1 \subseteq [0,1)^s$ of volume $1/b^m$ which contains no point of $\mathcal{P}$ or at least two points of $\mathcal{P}$. In the second case it immediately follows (by the same considerations as above) that $\mathcal{P}$ is not $s$-admissible. Consider now the first case: We can partition $[0,1)^s$ into $b^m$ elementary intervals $\mathcal{J}_i$ of the same shape as $\mathcal{J}_1$. Since $\mathcal{J}_1$ contains no point of $\mathcal{P}$ there exists at least one $i$ such that $\mathcal{J}_i$ contains at least two points, and this again contradicts the $s$-admissibility.    
\end{proof} 
\end{lem}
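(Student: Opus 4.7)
The plan is to prove the biconditional together with the sharpness statement by exploiting the product structure of $\| \cdot \|_b$ and translating between elementary intervals and tuples of agreeing leading $b$-adic digits. I would begin with the easier direction: if $\p$ is not a $(0,m,s)$-net, then $\p$ is not $s$-admissible. By definition some elementary interval of volume $b^{-m}$ contains either zero or at least two points of $\p$. Partitioning $[0,1)^s$ into the $b^m$ disjoint elementary intervals of the same shape, the empty case forces, by pigeonhole, a different such interval to contain at least two points, so we are reduced to the doubly occupied case. If $\bm{x}, \bm{y} \in \p$ lie in a common elementary interval $\prod_{i=1}^s [a_i/b^{d_i}, (a_i+1)/b^{d_i})$ with $\sum_i d_i = m$, then $\| y^{(i)} \ominus x^{(i)} \|_b \leq b^{-d_i}$ in each coordinate, so $\| \bm{y} \ominus \bm{x} \|_b \leq b^{-m}$, which violates $d$-admissibility for every $d \geq 0$ and in particular for $d = s$.

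For the converse, assume $\p$ is a $(0,m,s)$-net and suppose for contradiction that two distinct points $\bm{x}, \bm{y} \in \p$ satisfy $\| \bm{y} \ominus \bm{x} \|_b \leq b^{-(m+s)}$. Writing $\| y^{(i)} \ominus x^{(i)} \|_b = b^{-l^{(i)}}$ with integers $l^{(i)} \geq 1$ yields $\sum_i l^{(i)} \geq m+s$. By the very definition of the valuation, the first $l^{(i)} - 1$ $b$-adic digits of $x^{(i)}$ and $y^{(i)}$ coincide, so $\bm{x}$ and $\bm{y}$ both sit inside a common elementary interval of volume $b^{-\sum_i (l^{(i)} - 1)} \leq b^{-m}$. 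This interval is contained in an elementary interval of volume exactly $b^{-m}$, which by the net property contains a single point of $\p$ --- a contradiction. Hence $\min_{k \neq n} \| \bm{x}_n \ominus \bm{x}_k \|_b > b^{-(m+s)}$, which is precisely $s$-admissibility.

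For the sharpness claim that no $(0,m,s)$-net can be $d$-admissible with $d < s$, I would exhibit two explicit points whose valuation is already as small as $b^{-(m+s-1)}$. Applying the net property to the elementary intervals $[k/b^m,(k+1)/b^m) \times [0,1)^{s-1}$ produces a unique $\bm{x} \in \p$ with $x^{(1)} \in [0, b^{-m})$ and a unique $\bm{y} \in \p$ with $y^{(1)} \in [(b-1)/b^m, 1/b^{m-1})$. Reading off their $b$-adic expansions shows $\| y^{(1)} \ominus x^{(1)} \|_b = b^{-m}$, while trivially $\| y^{(i)} \ominus x^{(i)} \|_b \leq b^{-1}$ for $i \geq 2$. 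Multiplying, $\| \bm{y} \ominus \bm{x} \|_b \leq b^{-(m+s-1)}$, which rules out $d$-admissibility for any $d < s$.

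The main place to be careful will be the valuation book-keeping: the offset-by-one in the definition $\| x \|_b = b^{-(k+1)}$, distinguishing strict from non-strict inequalities at the borderline $\sum_i l^{(i)} = m+s$, and treating the degenerate case in which a coordinate of $\bm{y} \ominus \bm{x}$ vanishes (so that its valuation is trivially zero and contributes nothing harmful to the product bound). Once these technicalities are pinned down, the argument reduces to the routine dictionary between $b$-adic valuations and containment in elementary intervals of prescribed shape.
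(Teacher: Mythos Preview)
Your approach is essentially the paper's: the same three ingredients (pigeonhole for ``not a net $\Rightarrow$ not $s$-admissible'', the digit-agreement contradiction for ``net $\Rightarrow$ $s$-admissible'', and the explicit pair of points with first coordinates in $[0,b^{-m})$ and $[(b-1)b^{-m},b^{-m+1})$ for sharpness) appear in the same form, just reordered.

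One correction is needed in your first paragraph. Two points sharing an elementary interval with $\sum_i d_i = m$ have their first $d_i$ digits agreeing in coordinate $i$, which by the definition $\|x\|_b = b^{-(k+1)}$ gives $\|y^{(i)} \ominus x^{(i)}\|_b \leq b^{-(d_i+1)}$, hence $\|\bm y \ominus \bm x\|_b \leq b^{-(m+s)}$. It is this bound, not the looser $b^{-m}$ you wrote, that contradicts $s$-admissibility: the inequality $\|\bm y \ominus \bm x\|_b \leq b^{-m}$ alone is perfectly compatible with $\min > b^{-(m+s)}$, and your assertion that it ``violates $d$-admissibility for every $d \geq 0$'' has the monotonicity backwards (smaller $d$ is the stronger condition). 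You already flagged exactly this off-by-one as the place requiring care, so tightening the bound as above is all that is needed.
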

\begin{rem}
Note, that it might happen that a $(1,m,s)$-net in base $b$ is non-admissible for any integer $d$. To see this, just take $b$ copies of a $(0,m-1,s)$-net in base $b$. This gives an example of a $(1,m,s)$-net in base $b$ which is not $d$-admissible for any $d \in \mathbb{N}$.
\end{rem}
These preliminary considerations put us in the position to prove Theorem 3. In Chapter 3 we give the proof for a more general result in the general case. Note, that for $(t,m,s)$-nets with nonzero quality parameter the $d$-admissibility condition has to be required additionally. The idea underlying the proof of the theorem in the general case is exactly the same. \\ \\
\textbf{Proof of Theorem 3:}\\
Note that by Lemma 2.1 $(\bm{x}_n)_{0 \leq n < 2^m}$ is 2-admissible. To begin with, we want to find a suitable partition of the interval $J_{\bm{\gamma}}$.
Let therefore $\bm{r} = (r_1,r_2) \in \N^2$. For
\begin{equation*}
r_1 = 2j_1 \qquad \text{and} \qquad r_2 = m/2 + 2j_2
\end{equation*}
with $j_1, j_2 \in \{1,...,m/4\}$ it holds that
\begin{align*}
\gamma^{(1)} = \sum_{r_1} \frac{1}{2^{r_1}} \qquad \text{and} \qquad 
\gamma^{(2)} = \sum_{r_2} \frac{1}{2^{r_2}}. 
\end{align*}
Now define the set $A$ which contains all combinations of the indices $r_1$ and $r_2$, i.e.,
\begin{equation*}
A = \{(r_1,r_2)|\; r_1 = 2j_1, \; r_2 = m/2 + 2j_2,\; j_1, j_2 \in \{1,...,m/4\} \}.
\end{equation*}
The partition of $J_{\bm{\gamma}}$ is then given by
\begin{equation*}
J_{\bm{r},\bm{\gamma}} =\left[ [\gamma^{(1)}]_{r_1-1},[\gamma^{(1)}]_{r_1-1} + \frac{1}{2^{r_1}}\right) \times \left[ [\gamma^{(2)}]_{r_2-1},[\gamma^{(2)}]_{r_2-1} + \frac{1}{2^{r_2}}\right),
\end{equation*}
for $(r_1,r_2) \in A$.
Furthermore, let
\begin{align*}
A_1 &= \{\bm{r} \in A|\; r_1 + r_2 \leq m \}, \\
A_2 &= \{\bm{r} \in A|\; r_1 + r_2 = m+1 \}, \\
A_3 &= \{\bm{r} \in A|\; r_1 + r_2 \geq m+2 \},
\end{align*}
such that $A  = A_1 \cup A_2 \cup A_3$.
The intervals $J_{\bm{r},\bm{\gamma}}$ are elementary intervals in base $2$ with volume $\frac{1}{2^{r_1+r_2}}$, i.e., of order $r_1 + r_2$. Moreover, all $J_{\bm{r},\bm{\gamma}}$ are disjoint and therefore, we obtain with 
\begin{equation*}
\mathcal{A}(\bm{r}) := \sum_{n=0}^{2^m-1} \chi_{\strut J_{\bm{r},\bm{\gamma}}}(\boldsymbol{x}_n)
\end{equation*}		
\begin{align*}
\frac{1}{N} \Delta (\bm{\gamma},(\bm{x}_n)_{0 \leq n < 2^m})
&= \sum_{\bm{r} \in A} \left(\frac{\mathcal{A}(\bm{r})}{2^m} - \lambda_2(J_{\bm{r},\bm{\gamma}}) \right)\\
&= \sum_{\bm{r} \in A_1} \left(\frac{\mathcal{A}(\bm{r})}{2^m} - \lambda_2(J_{\bm{r},\bm{\gamma}}) \right)\\ 
& \quad + \sum_{\bm{r} \in A_2} \left(\frac{\mathcal{A}(\bm{r})}{2^m} - \lambda_2(J_{\bm{r},\bm{\gamma}}) \right)\\ 
& \quad + \sum_{\bm{r} \in A_3} \left( \frac{\mathcal{A}(\bm{r})}{2^m} - \lambda_2(J_{\bm{r},\bm{\gamma}}) \right)\\
&=: \Delta_1(\bm{\gamma}) + \Delta_2(\bm{\gamma}) + \Delta_3(\bm{\gamma}).
\end{align*}	
\textit{Consider $\Delta_1$}. Since $(\bm{x}_n)_{0 \leq n < 2^m}$ is a $(0,m,2)$-net, it is fair with respect to all elementary intervals of order $\leq m$. For $\bm{r} \in A_1$ it holds that $r_1 + r_2 \leq m$ and therefore
\begin{equation*}
\Delta_1(\bm{\gamma}) = \sum_{\bm{r} \in A_1} \frac{\mathcal{A}(\bm{r})}{2^m} - \lambda_2(J_{\bm{r},\bm{\gamma}}) = 0.
\end{equation*}	
\textit{Consider $\Delta_2$}. From the condition that $\bm{r} \in A_2 \subseteq A$ we get that 
\begin{equation*}
r_1 = 2j_1 \qquad \text{and} \qquad r_2 = m/2 + 2j_2,
\end{equation*}
where $j_1, j_2 \in \{1,...,m/4\}$. It follows that 		
\begin{equation*}
r_1 + r_2 = m + 2(j_1 + j_2 - m/4).
\end{equation*}	
Since $j_1 + j_2 - m/4 \in \Z$ we know that $2(j_1 + j_2 - m/4) \neq 1$ which is a contradiction to the assumption that $r_1 + r_2 = m+1$ for all $\bm{r} \in A_2$. Therefore, $A_2 = \emptyset$ and $\Delta_2 = 0$. \\ \\
\textit{Consider $\Delta_3$}. As a first step we want to show that $J_{\bm{r},\bm{\gamma}}$ with $r_1 + r_2 \geq m +2$ cannot contain any point of $(\bm{x}_n)_{0 \leq n < 2^m}$ and we will do that by deriving a contradiction. \\	
Suppose there exists $\bm{x}_k \in J_{\bm{r},\bm{\gamma}}$ for some $k < 2^m$ and some $\bm{r} \in A_3$. Then we know for the first coordinate	
\begin{equation*}
[\gamma^{(1)}]_{r_1-1} \leq x_k^{(1)} < [\gamma^{(1)}]_{r_1-1} + \frac{1}{2^{r_1}}
\end{equation*}	
which is equivalent to	
\begin{equation*}
\frac{1}{2^2} +\frac{1}{2^4} + \cdots + \frac{1}{2^{r_1-2}}
\leq \frac{x_{k,1}^{(1)}}{2} + \cdots + \frac{x_{k,r_1-1}^{(1)}}{2^{r_1-1}} + \frac{x_{k,r_1}^{(1)}}{2^{r_1}} + \cdots
< \frac{1}{2^2} + \frac{1}{2^4} + \cdots + \frac{1}{2^{r_1-2}} + \frac{1}{2^{r_1}}.
\end{equation*}	
Therefore, it has to hold that $x_{k,2}^{(1)} = x_{k,4}^{(1)} = ... = x_{k,r_1-2}^{(1)} = 1$ and $x_{k,1}^{(1)} = x_{k,3}^{(1)} = ... = x_{k,r_1-1}^{(1)} = 0$. An analogous procedure can be done for the second coordinate. Hence,	
\begin{equation}
\label{eq:1-x_in_J}
[\gamma^{(1)}]_{r_1-1} = [x_k^{(1)}]_{r_1-1} \qquad \text{ and } \qquad
[\gamma^{(2)}]_{r_2-1} = [x_k^{(2)}]_{r_2-1}.
\end{equation}	
Combining (\ref{eq:1-x_in_J}) and the assumption that $\bm{x}_0 = \bm{\gamma}$ leads to	
\begin{equation*}
[(\bm{x}_k \ominus \bm{x}_0)^{(1)}]_{r_1-1} = 0 \qquad \text{and} \qquad
[(\bm{x}_k \ominus \bm{x}_0)^{(2)}]_{r_2-1} = 0.
\end{equation*}	
Thus, we get $\|x_k^{(i)} \ominus x_0^{(i)} \|_2 \leq \frac{1}{2^{r_i}}$.	
Since $\bm{r} \in A_3$, i.e., $r_1 + r_2 \geq m+2$, it follows that	
\begin{equation*}
\|\bm{x}_k \ominus \bm{x}_0 \|_2 \leq \frac{1}{2^{r_1+r_2}} \leq \frac{1}{2^{m+2}}.
\end{equation*}	
This is a contradiction to the assumption that $(\bm{x}_n)_{0 \leq n < 2^m}$ is a 2-admissible $(0,m,2)$-net in base $2$. Hence, $\mathcal{A}(\bm{r}) = 0$ for all $\bm{r} \in A_3$ and	
\begin{align*}
\Delta_3(\bm{\gamma}) &= \sum_{\bm{r} \in A_3}
\left(\frac{\mathcal{A}(\bm{r})}{2^m} - \lambda_2(J_{\bm{r},\bm{\gamma}}) \right) \\
&= - \sum_{\bm{r} \in A_3} \frac{1}{2^{r_1 + r_2}} \\
&\leq - \sum_{\substack{\bm{r} \in A_3 \\ r_1 + r_2 = m+2}} \frac{1}{2^{m+2}} \\
&= - |A_4|\frac{1}{2^{m+2}}
\end{align*}		
with 		
\begin{align*}
A_4 = \{\bm{r} \in A_3|\;r_1+r_2 = m+2\}.
\end{align*}	
It is easy to see that
\begin{equation*}
|A_4| = \frac{m}{4}
\end{equation*}	
for $m \geq 4$ and $m \equiv 0 \mod 4$, and so we finally get
\begin{align*}
\frac{1}{N}\Delta ({\bm{\gamma}},(\bm{x}_n)_{0 \leq n < 2^m})
&= \Delta_3(\bm{\gamma})\\
&\leq - \frac{1}{2^{m+2}}|A_4| \\
&= -\frac{1}{4} \frac{1}{2^{m+2}} m.
\end{align*}
\qed
\section{Proof of Theorem 4}
The first aim of this section is to focus on the assumption of Theorem 3 that there exists a point $\bm{x}_0 \in \p$ such that $\bm{x}_0 = \bm{\gamma}$ (of course the condition $\bm{x}_0 = \bm{\gamma}$ can be replaced by $\bm{x}_n = \bm{\gamma}$ for any $n \in \lbrace 0, \ldots, 2^m -1 \rbrace$). This restriction on the point set is weakened by showing that there are many possible choices for $\bm{\gamma}$ such that the proof of Theorem 3 can still be performed in an analogous way.
In fact, it turns out that $\bm{\gamma}$ only has to fulfill some simple properties as the following lemma shows:
\begin{lem}
\label{lma:levin5}
Let $(\bm{x}_n)_{0 \leq n < b^m}$ be a $(0,m,s)$-net in base $b$. Let $\bm{x}_0 \in \prod_{j=1}^{s}[\gamma^{(j)}, \gamma^{(j)}+\frac{1}{b^{\max (R_j)}})$, where
\begin{equation*}
\gamma^{(j)} = \sum_{r \in R_j} \frac{a_{r}^{(j)}}{b^{r}},
\end{equation*}
$a_{r}^{(j)} \in \{1,2,...,b-1\}$ and $R_j \subseteq \{1,2,...,m\}$ for $j = 1,...,s$. Here the $R_j$ are arbitrary, but for $\bm{r} = (r_1, r_2, ...,r_s) \in R_1 \times R_2 \times ... \times R_s$, the following constraints need to be satisfied:
\begin{itemize}
\item $|\{\bm{r} | \; m+1 \leq \sum_{j = 1}^{s} r_j <  m + s \}| \leq \frac{m^{s-1}}{\delta}$,
\item $|\{\bm{r} | \; \sum_{j = 1}^{s} r_j = m + \alpha \} | \geq \frac{m^{s-1}}{\beta}$,
\end{itemize}
for some constant $\beta > 0$, some integer $\alpha \geq s$ and for $\delta > \frac{b^\alpha (b^{s-1}-1)\beta}{b^{s-1}}$.
Then, it holds for the interval $J_{\bm{\gamma}} = \prod_{j=1}^{s} [0,\gamma^{(j)})$ that
\begin{equation*}
\frac{1}{N} \Delta (\bm{\gamma}, (\bm{x}_n)_{0 \leq n < b^m})
\leq - \frac{m^{s-1}}{b^m} \left(- \frac{(b-1)^s}{\delta} \frac{b^{s-1}-1}{b^{s-1}} + \frac{(b-1)^s}{\beta} \frac{1}{b^\alpha}\right),
\end{equation*}
where $\left(- \frac{(b-1)^s}{\delta} \frac{b^{s-1}-1}{b^{s-1}} + \frac{(b-1)^s}{\beta} \frac{1}{b^\alpha}\right) > 0$.
\end{lem}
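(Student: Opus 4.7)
The plan is to extend the three-piece decomposition from the proof of Theorem 3 by refining the partition of $J_{\bm{\gamma}}$ so as to accommodate arbitrary digit values $a_r^{(j)} \in \{1, \ldots, b-1\}$. For $\bm{r} = (r_1, \ldots, r_s) \in R_1 \times \cdots \times R_s$ and a digit vector $\bm{c} = (c_1, \ldots, c_s)$ with $c_j \in \{0, \ldots, a_{r_j}^{(j)} - 1\}$ I would set
\begin{equation*}
J_{\bm{r}, \bm{c}, \bm{\gamma}} = \prod_{j=1}^{s} \left[\, [\gamma^{(j)}]_{r_j-1} + \tfrac{c_j}{b^{r_j}},\ [\gamma^{(j)}]_{r_j-1} + \tfrac{c_j+1}{b^{r_j}} \,\right),
\end{equation*}
a box of volume $b^{-(r_1+\cdots+r_s)}$. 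A short check shows that these boxes are pairwise disjoint and partition $J_{\bm{\gamma}}$, since $\sum_{\bm{r},\bm{c}} b^{-\sum r_j} = \prod_{j=1}^{s} \gamma^{(j)}$. I would then split the index set into $A_1 = \{\sum r_j \leq m\}$, $A_2 = \{m+1 \leq \sum r_j \leq m+s-1\}$, $A_3 = \{\sum r_j \geq m+s\}$, and decompose $\frac{1}{N}\Delta(\bm{\gamma},(\bm{x}_n)) = \Delta_1 + \Delta_2 + \Delta_3$ accordingly.

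For $A_1$ the $(0,m,s)$-net is fair on each box because $\sum r_j \leq m$, so $\Delta_1 = 0$ exactly as in Theorem 3. For $A_3$ I would adapt the digit-matching argument to the weaker hypothesis $\bm{x}_0 \in \prod_j [\gamma^{(j)}, \gamma^{(j)} + b^{-\max R_j})$: this interval is precisely the set of points whose first $\max R_j$ base-$b$ digits in coordinate $j$ agree with those of $\gamma^{(j)}$. If some $\bm{x}_k$ lay in $J_{\bm{r},\bm{c},\bm{\gamma}}$ with $\bm{r} \in A_3$, then the first $r_j - 1$ digits of $x_k^{(j)}$ would coincide with those of $\gamma^{(j)}$ and hence of $x_0^{(j)}$, which would give $\|\bm{x}_0 \ominus \bm{x}_k\|_b \leq b^{-\sum r_j} \leq b^{-(m+s)}$. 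By Lemma 2.1 the net is $s$-admissible, which forces $k = 0$; however the inequality $x_0^{(j)} \geq \gamma^{(j)}$ combined with $c_j + 1 \leq a_{r_j}^{(j)}$ excludes $\bm{x}_0$ from the half-open box, a contradiction. Hence $\mathcal{A}(\bm{r},\bm{c}) = 0$ on $A_3$, and keeping only the terms with $\sum r_j = m+\alpha$, summed over $\bm{c}$, yields a bound of the form $\Delta_3 \leq -(b-1)^s m^{s-1}/(\beta b^{m+\alpha})$.

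For $A_2$ every box $J_{\bm{r},\bm{c},\bm{\gamma}}$ has volume $b^{-\sum r_j} < b^{-m}$ and is contained in some elementary interval of order exactly $m$, which in a $(0,m,s)$-net holds exactly one point; thus $\mathcal{A}(\bm{r},\bm{c}) \leq 1$. Using the hypothesis $|A_2| \leq m^{s-1}/\delta$, the digit-vector multiplicity $\prod_j a_{r_j}^{(j)} \leq (b-1)^s$, and the inequality $1 - b^{-(\sum r_j - m)} \leq 1 - b^{-(s-1)}$ then leads to $\Delta_2 \leq (b-1)^s(b^{s-1}-1)\, m^{s-1}/(\delta\, b^{s-1}\, b^m)$. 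Adding the three contributions and observing that positivity of the stated bracket is precisely the condition $\delta > \beta b^\alpha(b^{s-1}-1)/b^{s-1}$ gives the claim. The main obstacle is threading the $\bm{c}$-refinement consistently through all three estimates while keeping the admissibility argument and the non-membership of $\bm{x}_0$ valid uniformly in $\bm{c}$, and aligning the $(b-1)^s$ factors that appear in the lower and upper estimates so that they cancel in the final positivity condition.
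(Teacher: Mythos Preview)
Your proposal is correct and follows essentially the same approach as the paper: the same refined partition into boxes $J_{\bm r,\bm c,\bm\gamma}$ (the paper writes $\bm g$ for your $\bm c$), the same three-way split $A_1,A_2,A_3$, the net property for $\Delta_1$, the $0$/$1$ point-count for $\Delta_2$, and the admissibility contradiction via Lemma~2.1 for $\Delta_3$. Your explicit exclusion of the case $k=0$ in the $A_3$ step (via $c_j+1\le a_{r_j}^{(j)}$ and $x_0^{(j)}\ge\gamma^{(j)}$) is a small point the paper leaves implicit, so if anything your write-up is slightly more careful there.
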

\begin{proof}
Let $A = \{\bm{r}| \; r_j \in R_j, \; j = 1,...,s\}$ be the set of indices which can be split into three disjoint subsets
\begin{align*}
A_1 &= \{\bm{r} \in A|\; \sum_{j=1}^{s} r_j \leq m \}, \\
A_2 &= \{\bm{r} \in A|\;  m+1 \leq \sum_{j=1}^{s} r_j < m+s \}, \\
A_3 &= \{\bm{r} \in A|\; \sum_{j=1}^{s} r_j \geq m+s \}.
\end{align*}
Further let 
\begin{equation*}
A_4= \lbrace \bm{r} \in A| \; \sum_{j=1}^s r_j = m + \alpha  \rbrace.
\end{equation*}
A partition of the interval $J_{\bm{\gamma}}$ is given by the subintervals 
\begin{equation*}
J_{\bm{r},\bm{\gamma},\bm{g}} = \prod_{j=1}^{s} \left[[\gamma^{(j)}]_{r_j-1}+\frac{g_j}{b^{r_j}},[\gamma^{(j)}]_{r_j-1}+\frac{g_j+1}{b^{r_j}}\right)
\end{equation*}
where $\bm{g} = (g_1,...,g_s)$ with $g_j \in \{0,1,...,a_{r_j}-1\}$. The intervals $J_{\bm{r},\bm{\gamma},\bm{g}}$ are disjoint elementary intervals of order $\sum_{j=1}^s r_j$ in base $b$. We define 
\begin{equation*}
\mathcal{A}({\bm{r},\bm{g}}) := \sum_{n=0}^{b^m-1} \chi_{\strut J_{\bm{r},\bm{\gamma},\bm{g}}}(\boldsymbol{x}_n).
\end{equation*}
Then, it is possible to split the estimation of the discrepancy function into three parts corresponding to the sets $A_1, A_2$ and $A_3$,
\begin{align*}
\frac{1}{N} \Delta (\bm{\gamma}, (\bm{x}_n)_{0 \leq n < b^m})
&= \sum_{\substack{\bm{r} \in A_1\\ \bm{g}}} \left(\frac{\mathcal{A}(\bm{r},\bm{g})}{b^m} - \lambda_s(J_{\bm{r},\bm{\gamma},\bm{g}}) \right)\\ 
& \quad + \sum_{\substack{\bm{r} \in A_2\\ \bm{g}}} \left(\frac{\mathcal{A}(\bm{r},\bm{g})}{b^m} - \lambda_s(J_{\bm{r},\bm{\gamma},\bm{g}}) \right)\\ 
& \quad + \sum_{\substack{\bm{r} \in A_3\\ \bm{g}}} \left( \frac{\mathcal{A}(\bm{r},\bm{g})}{b^m} - \lambda_s(J_{\bm{r},\bm{\gamma},\bm{g}}) \right)\\
&= \Delta_1 + \Delta_2 + \Delta_3.
\end{align*}
It follows by the net property and the fact that $J_{\bm{r},\bm{\gamma},\bm{g}}$ are elementary intervals that 
\begin{equation*}
\Delta_1 = \sum_{\substack{\bm{r} \in A_1\\ \bm{g}}} \left(\frac{\mathcal{A}({\bm{r},\bm{g}})}{b^m}-\lambda_s(J_{\bm{r},\bm{\gamma},\bm{g}})\right) = 0.
\end{equation*} 
Since $J_{\bm{r},\bm{\gamma},\bm{g}}$, $\bm{r} \in A_2$, are elementary intervals of order greater or equal to $m+1$, they either contain one point of the $(0,m,s)$-net or they are empty. Let us consider these two cases:
\begin{enumerate}
\item $\exists \; \bm{x}_k \in J_{\bm{r},\bm{\gamma},\bm{g}}$. Then it holds that
\begin{equation*}
\frac{1}{b^m}-\frac{1}{b^{m+1}} \leq \frac{\mathcal{A}({\bm{r},\bm{g}})}{b^m} - \lambda_s(J_{\bm{r},\bm{\gamma},\bm{g}})
= \frac{1}{b^m} - \frac{1}{b^{\sum_{j=1}^s r_j}} \leq \frac{1}{b^m}-\frac{1}{b^{m+s-1}}.
\end{equation*}
\item $\nexists \; \bm{x}_k \in J_{\bm{r},\bm{\gamma},\bm{g}}$. In this case it holds that
\begin{equation*}
-\frac{1}{b^{m+1}} \leq \frac{\mathcal{A}({\bm{r},\bm{g}})}{b^m} - \lambda_s(J_{\bm{r},\bm{\gamma},\bm{g}})
= -\frac{1}{b^{\sum_{j=1}^s r_j}} \leq -\frac{1}{b^{m+s-1}}.
\end{equation*}
\end{enumerate}
Then, by the assumptions on $A_2$ we obtain the estimate
\begin{equation*}
-\frac{1}{b^{m+1}} \frac{m^{s-1}}{\delta} (b-1)^s \leq \Delta_2 \leq \left(\frac{1}{b^{m}}-\frac{1}{b^{m+s-1}}\right) \frac{m^{s-1}}{\delta}(b-1)^s.
\end{equation*}
Now, consider $\Delta_3$. The first step is again to show that $J_{\bm{r},\bm{\gamma},\bm{g}}$ with $\bm{r} \in A_3$ and for all associated $\bm{g}$, cannot contain any point of a $(0,m,s)$-net which has an element $\bm{x}_0 \in \prod_{j=1}^{s}[\gamma^{(j)}, \gamma^{(j)}+\frac{1}{b^{\max (R_j)}})$. The condition that $\bm{x}_0$ is contained in this set, is equivalent to
\begin{equation}
[\gamma^{(j)}]_{r_j} = [x_0^{(j)}]_{r_j}, \qquad \text{ for } j = 1,...,s.
\end{equation}
Suppose there exists $\bm{x}_k \in J_{\bm{r},\bm{\gamma},\bm{g}}$ for some $k < b^m$, some $\bm{r} \in A_3$ and some $\bm{g}$. It then follows that 
\begin{equation*}
[\gamma^{(j)}]_{r_j-1} = [x_k^{(j)}]_{r_j-1}, \qquad \text{for } j = 1,...,s.
\end{equation*}
Therefore,
\begin{equation*}
\|\bm{x}_k \ominus \bm{x}_0 \|_b \leq \frac{1}{b^{\sum_{j=1}^s r_j}} \leq \frac{1}{b^{m+s}}.
\end{equation*}
This is a contradiction to the assumption that $\bm{x}_k$ and $\bm{x}_0$ are elements of a $(0,m,s)$-net in base $b$ because from Lemma 2.1 we know that $\min_{\bm{x}, \bm{y} \in \p} \|\bm{x} \ominus \bm{y}\|_b = \frac{1}{b^{m+s-1}}$.
Hence, all $J_{\bm{r},\bm{\gamma},\bm{g}}$ where $\bm{r} \in A_3$ are empty. Using the fact that $|A_4| \geq \frac{m^{s-1}}{\beta}$, we then get 
\begin{align*}
\Delta_3 &= \sum_{\substack{\bm{r} \in A_3 \\ \bm{g}}}
\left(\frac{\mathcal{A}({\bm{r},\bm{g}})}{b^m} - \lambda_s(J_{\bm{r},\bm{\gamma},\bm{g}})\right) \\
&= - \sum_{\substack{\bm{r} \in A_3\\ \bm{g}}} \frac{1}{b^{\sum_{j=1}^{s} r_j}} \\
&\leq - \sum_{\substack{\bm{r} \in A_4 \\ \bm{g}}}  \frac{1}{b^{m+\alpha}}\\
&\leq  - \frac{m^{s-1}}{\beta}(b-1)^s  \frac{1}{b^{m+\alpha}}.
\end{align*}
Finally, we get the estimate	
\begin{align*}
\frac{1}{N}\Delta (\bm{\gamma}, (\bm{x}_n)_{0 \leq n < b^m}) &= \Delta_1 + \Delta_2 + \Delta_3 \\
&\leq \left(\frac{1}{b^{m}}-\frac{1}{b^{m+s-1}}\right) \frac{m^{s-1}}{\delta}(b-1)^{s} - \frac{m^{s-1}}{\beta}(b-1)^{s}  \frac{1}{b^{m+\alpha}} \\
&= - \frac{m^{s-1}}{b^m} \left(- \frac{(b-1)^s}{\delta} \frac{b^{s-1}-1}{b^{s-1}} + \frac{(b-1)^s}{\beta} \frac{1}{b^\alpha}\right) < 0
\end{align*}	
for $\delta > \frac{b^\alpha (b^{s-1}-1)\beta}{b^{s-1}}$.
\end{proof}
Subsequently, we now derive Theorem 4, which in some sense describes how dense possible choices of $\bm{\gamma}$ are in $[0,1)^s$. \\ \\
\textbf{Proof of Theorem 4:}\\
Let $\Gamma$ be defined as the set, which contains all points of the form $\bm{\gamma} = (\sum_{r_1} \frac{1}{b^{r_1}},..., \sum_{r_s} \frac{1}{b^{r_s}})$,
where $r_i \in R_i \subseteq \{1,2,...,m\}$ for $i = 1,...,s$ and the sets $R_i$ fulfill the following conditions:
\begin{itemize}
\item $|\{(r_1,...,r_s) | \; m+1 \leq \sum_{i = 1}^{s} r_i <  m + s \}| = 0$,
\item $|\{(r_1,...,r_s) | \; \sum_{i = 1}^{s} r_i = m + s \}| \geq \frac{m^{s-1} (2s-3)^{s-1}}{(4s^2(s-1)^2)^{s-1}}$.
\end{itemize}
Consider now the $b$-adic digit expansion of some $\bm{x} = (x^{(1)}, ..., x^{(s)}) \in [0,1)^s$,
\begin{equation*}
x^{(i)} = \sum_{s_i \in S_i} \frac{a_{s_i}}{b^{s_i}},
\end{equation*}
where $S_i \subseteq \mathbb{N}$ is the set of indices for which we have $a_{s_i} \in \{1,2,...,b-1\}$ for $i = 1,...,s$. Now we have to construct a point $\bm{\gamma}$ with the following properties:
\begin{equation}
\label{eq:gamma_first}
\| \boldsymbol{x} - \boldsymbol{\gamma} \| < b\sqrt{s} \frac{1}{b^{ \frac{m}{2(s-1)s}}},
\end{equation}
\begin{equation}
\label{eq:gamma_second}
\bm{\gamma} \in \Gamma, \text{ where } \bm{\Gamma} \text{ is defined as above}. 
\end{equation}
Let $\bm{\gamma} = (\gamma^{(1)}, ..., \gamma^{(s)})$,	
\begin{equation*}
\gamma^{(i)} = \sum_{r_i \in R_i} \frac{a_{r_i}}{b^{r_i}},
\end{equation*}
where
\begin{equation*}
R_i = \{s_i \in S_i| \; s_i \leq k\} \cup T_i, \text{ where } k := \Big[ \frac{m}{2(s-1)s} \Big],   
\end{equation*}
and where $t_i \in T_i$ has the form 
\begin{equation*}
t_i = \left[ \frac{m}{2s(s-1)} \right] + s j_i
\end{equation*}
for $i = 1,..., s-1$ and $t_s \in T_s$ has the form
\begin{equation*}
t_s = m - (s-1) \left(\left[\frac{m}{2s(s-1)}\right] + s \bar{m}\right) + s j_s.
\end{equation*}
Here, $j_1,...,j_s \in \{1,...,\bar{m}\}$ with 	
\begin{equation*}
\bar{m} = \left[\frac{m(2s-3)}{2s^2(s-1)}\right].
\end{equation*}
Moreover, we choose $a_{r_i} = a_{s_i}$ for all $r_i \in \{s_i \in S_i| \; s_i \leq k\}$ and otherwise, $a_{r_i} = 1$.\\
By the choice of $S_i$ it then holds that $[x^{(i)}]_k = [\gamma^{(i)}]_k$ for all $i = 1,...,s$. This implies that $\bm{x}$ and $\bm{\gamma}$ are contained in the same square elementary interval of order $s k$, i.e.,
\begin{equation*}
\bm{x}, \bm{\gamma} \in \prod_{i=1}^s \left[\frac{A_i}{b^k},\frac{A_i+1}{b^k}\right)
\end{equation*}
for some $A_i \in \{0,1,...,b^k-1\}$. Therefore, it holds that 
\begin{equation*}
\|\bm{x} - \bm{\gamma}\| < \sqrt{s}\frac{1}{b^k} \leq b\sqrt{s} \frac{1}{b^{ \frac{m}{2(s-1)s}}}.
\end{equation*}
Hence, (\ref{eq:gamma_first}) is shown.
It remains to check, whether the condition on $\bm{\gamma}$, mentioned at the beginning of the proof, are satisfied, i.e., if $\bm{\gamma} \in \Gamma$. Obviously, $R_i \subseteq \{1,2,...,m\}$ for all $i= 1,...,s$.

To begin with, observe that for any $r_i \in R_i$, where $i = 1,..., s-1$, and for any $s_s \in S_s, s_s \leq k$ we have that
\begin{align*}
\sum_{i=1}^{s-1} r_i + s_s 
&\leq (s-1)\left[\frac{m}{2s(s-1)}\right]+\bar{m}s + k \\
&\leq (s-1)\left(\frac{m}{2s(s-1)}\right)+\frac{m(2s-3)}{2s^2(s-1)}s +\frac{m}{2s(s-1)} \leq m.
\end{align*}
Additionally, for any $s_1 \in S_1, s_1 \leq k$ and $r_i \in R_i$, where $i = 2,...,s$ it holds that
\begin{align*}
s_1 + \sum_{i = 2}^s r_i 
&\leq k + (s-1)\left(\left[\frac{m}{2s(s-1)}\right] + s \bar{m}\right) +s \bar{m}\\
&\leq s \frac{m}{2s(s-1)} + (s-1) s  \frac{m(2s-3)}{2s^2(s-1)} + s  \frac{m(2s-3)}{2s^2(s-1)} = m.
\end{align*}
Hence, we can conclude that 
\begin{equation*}
|\{(r_1,...,r_s) | \; \sum_{i = 1}^s r_i > m , r_i \in R_i \}| = \abs{\{(t_1,...,t_s) | \; \sum_{i = 1}^s t_i > m , t_i \in T_i \}}.
\end{equation*}
Therefore, let us consider $t_i \in T_i$ for $i = 1,...,s$.	We have that 
\begin{equation*}
\sum_{i = 1}^s t_i = m + s(j_1 +...+j_s - (s-1) \bar{m}) \neq m+s,
\end{equation*}
because of the fact that $\bar{m} \in \Z$. It follows that
\begin{equation*}
|\{(r_1,...,r_s) | \; m+1 \leq \sum_{i = 1}^{s} r_i <  m + s \}| = 0.
\end{equation*}
For the case $t_1 + ...+t_s = m+s$ it holds that
\begin{equation*}
j_s = 1+(s-1) \bar{m} - j_1 - ... - j_{s-1}.
\end{equation*}
This implies that the following inequality must be fulfilled:
\begin{equation*}
1 \leq 1+(s-1) \bar{m} - j_1 - ... - j_{s-1} \leq \bar{m}.
\end{equation*}
Obviously, the left inequality holds for any choice of $j_1,...,j_{s-1}$. For the right inequality consider the case that $j_1 = ... = j_{s-1}$. Then we can conclude that it has to hold
\begin{equation*}
j_1 \geq \left[ \frac{(s-2)\bar{m}}{s-1} \right] +1.
\end{equation*}
Hence, we obtain
\begin{align*}
|\{(r_1,...,r_s) | \; \sum_{i = 1}^{s} r_i = m + s \}|\}| &= 
|\{(t_1,...,t_s) | \; \sum_{i = 1}^{s} t_i = m + s\}| \\
&= \left(\bar{m} - \left[ \frac{(s-2)\bar{m}}{s-1} \right]\right)^{s-1} \\
&\geq \left[ \frac{\bar{m}}{s-1} \right]^{s-1} \\
& \geq \frac{m^{s-1} (2s-3)^{s-1}}{(4s^2(s-1)^2)^{s-1}}
\end{align*}
by using the estimate 
\begin{equation*}
\left[\frac{\bar m}{s-1}\right] = \left[\frac{\left[\frac{m(2s-3)}{2s^2(s-1)}\right]}{s-1}\right]
\geq \frac{m(2s-3)}{4s^2(s-1)^2} \qquad \text{for } m \geq \frac{2s^2(s-1)^2}{2s-3}.
\end{equation*}
Thus, also (\ref{eq:gamma_second}) is shown. Now we finish the proof of Theorem \ref{thm:epsilon-umgebung-allgemein}. It remains to show the second item. Let $\mathcal{P} = \{\bm{x}_0, \dots, \bm{x}_{b^m-1}\}$ be a $(0,m,s)$-net in base $b$ for which some element $\bm{x}_i$ belongs to the set $\Gamma$. Therefore, the conditions of Lemma \ref{lma:levin5} are satisfied with $\alpha = s, \beta = \frac{(4s^2(s-1)^2)^{s-1}}{(2s-3)^{s-1}}$ and for any $\delta > \frac{b(b^{s-1}-1)(4s^2(s-1)^2)^{s-1}}{(2s-3)^{s-1}}$. By considering the limit $\delta \rightarrow \infty$ we obtain 
\begin{equation*}
	\frac{1}{N} \Delta(\bm{\gamma}, (\bm{x}_n)_{0 \leq n < b^m}) \leq - \frac{m^{s-1}}{b^m} \frac{(b-1)^s(2s-3)^{s-1}}{b^s(4s^2(s-1)^2)^{s-1}},
\end{equation*}
and the assertion follows with $N = b^m$.
\qed
\section{Re-proof of Theorem 2} 
In the interest of clear presentation, the proof of Theorem 2 will be split into several auxiliary lemmas. The necessity of the following two results should be motivated. In a later step, we will define a special axes-parallel box $[\boldsymbol{0}, \boldsymbol{y})$ and partition this multi-dimensional interval into several disjoint axes-parallel boxes (see, equation (\ref{eq:eq1})). Lemma 4.1 and Lemma 4.2 show under which condition on $n$ a sequence element $H_s(n)$ of the Halton sequence is contained in one of these disjoint intervals.   
\begin{lem}
Define $x_i:= \sum_{j=1}^{\infty} x_{i,j} b_i ^{-j}$, $x_{i,j} \in \lbrace 0, 1, \ldots, b_i -1 \rbrace$, and its truncation $[x_i]_r := \sum_{j=1}^r x_{i,j} b_i^{-j}$, for $i=1, \ldots, s$, $r=1,2, \ldots$. Then, we have 
\begin{align*}
\phi_{b_i}(n) \in [[x_i]_r,[x_i]_r + b_i^{-r}) \Longleftrightarrow  n \equiv \dot{x}_{i,r} \mod b_{i}^r,\text{ where } \dot{x}_{i,r} = \sum_{j=1}^r x_{i,j} b_i ^{j-1}.  
\end{align*}
\begin{proof}
The result follows immediately from the definition of the Halton sequence.
\end{proof}
\end{lem}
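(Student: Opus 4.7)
The statement is essentially a restatement of the definition of the radical inverse function, so my plan is simply to unwind the definitions carefully and match digits. First I would write the base-$b_i$ expansion $n = \sum_{j \geq 0} n_j b_i^j$ and observe that, by the definition of $\phi_{b_i}$, the first $r$ fractional digits of $\phi_{b_i}(n)$ in base $b_i$ are exactly $n_0, n_1, \ldots, n_{r-1}$. Since $n$ is a nonnegative integer its base-$b_i$ expansion is finite, so there is no ambiguity in these digits.

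Next I would rephrase the interval membership $\phi_{b_i}(n) \in [[x_i]_r, [x_i]_r + b_i^{-r})$ as the truncation identity $[\phi_{b_i}(n)]_r = [x_i]_r$. Indeed, $[x_i]_r$ is an integer multiple of $b_i^{-r}$ that depends only on $x_{i,1}, \ldots, x_{i,r}$, so any $y \in [0,1)$ whose finite base-$b_i$ expansion begins with $x_{i,1}, \ldots, x_{i,r}$ satisfies $0 \le y - [x_i]_r < b_i^{-r}$, and conversely any $y$ in this half-open interval of length $b_i^{-r}$ must have $[y]_r = [x_i]_r$. Matching coefficients in the expansions $[\phi_{b_i}(n)]_r = \sum_{j=0}^{r-1} n_j b_i^{-j-1}$ and $[x_i]_r = \sum_{j=1}^{r} x_{i,j} b_i^{-j}$ then yields the $r$ digit equations $n_{j-1} = x_{i,j}$ for $j = 1, \ldots, r$.

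Finally, I would translate these digit equations into a single congruence. Because $n \bmod b_i^r = \sum_{j=0}^{r-1} n_j b_i^j$, the conditions $n_{j-1} = x_{i,j}$ for $j = 1, \ldots, r$ are jointly equivalent to $n \equiv \sum_{j=1}^{r} x_{i,j} b_i^{j-1} = \dot{x}_{i,r} \pmod{b_i^r}$, and every step along the way is reversible, which gives the biconditional. I do not foresee any real obstacle: the lemma is a direct book-keeping exercise, and the only subtlety worth stating explicitly is the uniqueness of the finite base-$b_i$ expansion of the integer $n$, which rules out any boundary pathology when one passes between the interval condition and the digit condition.
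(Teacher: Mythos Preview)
Your proposal is correct and is exactly the detailed unwinding of the definition that the paper's one-line proof (``follows immediately from the definition of the Halton sequence'') is gesturing at. There is no meaningful difference in approach; you have simply written out the digit-matching argument explicitly, and your remark about the finiteness of the base-$b_i$ expansion of $n$ correctly disposes of the only conceivable subtlety.
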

\begin{lem}
For a vector $\boldsymbol{r}=(r_1, \ldots, r_s)$ of positive integers, let $B_{\boldsymbol{r}}:= \prod_{i=1}^s b_i ^{r_i}$, and the integer $M_{i,\boldsymbol{r}}$, be defined such that $M_{i,\boldsymbol{r}} (B_{\boldsymbol{r}} b_i^{-r_i}) \equiv 1 \mod b_i^{r_i}$, then we have
\begin{equation*}
\phi_{b_i}(n) \in [[x_i]_{r_i},[x_i]_{r_i} + b_i^{-r_i}), \text{ for }i=1,\ldots, s \Longleftrightarrow n \equiv \ddot{x}_{{\boldsymbol{r}}} \mod B_{\boldsymbol{r}},
\end{equation*} with  $\ddot{x}_{{\boldsymbol{r}}}= \sum_{i=1}^{s} M_{i,\boldsymbol{r}} B_{\boldsymbol{r}} b_{i}^{-r_i} \dot{x}_{i,r_i}$.
\begin{proof}
This follows immediately from Lemma 4.1 and the Chinese remainder theorem. 
\end{proof}
\end{lem}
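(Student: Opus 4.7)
The plan is to combine the one-dimensional criterion of Lemma 4.1 with the classical Chinese remainder theorem. First, I would apply Lemma 4.1 coordinate by coordinate: for each $i \in \{1, \ldots, s\}$, the interval membership $\phi_{b_i}(n) \in [[x_i]_{r_i}, [x_i]_{r_i} + b_i^{-r_i})$ is equivalent to $n \equiv \dot{x}_{i, r_i} \mod b_i^{r_i}$. Hence the conjunction of the $s$ interval memberships is equivalent to the simultaneous system
\begin{equation*}
n \equiv \dot{x}_{i, r_i} \mod b_i^{r_i}, \qquad i = 1, \ldots, s.
\end{equation*}

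Next, I would collapse this system to a single congruence modulo $B_{\boldsymbol{r}}$. Since the bases $b_1, \ldots, b_s$ are pairwise coprime by the standing assumption of the survey, so are the moduli $b_1^{r_1}, \ldots, b_s^{r_s}$. The Chinese remainder theorem therefore guarantees a unique residue class modulo $B_{\boldsymbol{r}} = \prod_{i=1}^{s} b_i^{r_i}$ that satisfies all $s$ congruences simultaneously. It remains to verify that the explicit integer $\ddot{x}_{\boldsymbol{r}} = \sum_{i=1}^{s} M_{i, \boldsymbol{r}} B_{\boldsymbol{r}} b_i^{-r_i} \dot{x}_{i, r_i}$ represents this class.

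For the final identification step, I would fix $j \in \{1, \ldots, s\}$ and reduce $\ddot{x}_{\boldsymbol{r}}$ modulo $b_j^{r_j}$. For $i \neq j$, the factor $B_{\boldsymbol{r}} b_i^{-r_i} = \prod_{k \neq i} b_k^{r_k}$ is divisible by $b_j^{r_j}$, so the corresponding summand vanishes modulo $b_j^{r_j}$. For $i = j$, the defining property $M_{j, \boldsymbol{r}} (B_{\boldsymbol{r}} b_j^{-r_j}) \equiv 1 \mod b_j^{r_j}$ yields $M_{j, \boldsymbol{r}} B_{\boldsymbol{r}} b_j^{-r_j} \dot{x}_{j, r_j} \equiv \dot{x}_{j, r_j} \mod b_j^{r_j}$. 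Consequently $\ddot{x}_{\boldsymbol{r}} \equiv \dot{x}_{j, r_j} \mod b_j^{r_j}$ for every $j$, so $\ddot{x}_{\boldsymbol{r}}$ is precisely the CRT representative, completing the equivalence. I do not anticipate any genuine obstacle here: the lemma is essentially a direct consequence of Lemma 4.1 plus the standard CRT construction, and the only point requiring care is checking that the specific linear combination defining $\ddot{x}_{\boldsymbol{r}}$ really realises the unique CRT solution.
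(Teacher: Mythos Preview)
Your proposal is correct and follows exactly the same approach as the paper, which merely states that the result follows immediately from Lemma~4.1 and the Chinese remainder theorem. You have simply spelled out the standard CRT verification that the paper leaves implicit.
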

In order to obtain further information about the discrepancy function $\Delta(\cdot,(H_s(n))_{n=1}^{N})$ of the Halton sequence, we will investigate this function for a special setting of the interval $[\boldsymbol{0},\boldsymbol{y})$ and thereby exploit the information gained by the previous lemmas. Accordingly, let $y_i,\ i=1, \ldots, s$, be defined as 
\begin{equation*}
y_i:= \sum_{j=1}^m b_i^{-j \tau_i}, \text{ with } \tau_i= \min \lbrace 1 \leq k < B^{(i)} | b_i^k \equiv 1 \mod B^{(i)}  \rbrace, 
\end{equation*} where $m \in \mathbb{N}, m \geq B$ and $B^{(i)}= \frac{B}{b_i}$. If we consider, for instance, the two-dimensional Halton sequence in bases $b_1=2$ and $b_2=3$, we obtain $\tau_1=2$ and $\tau_2=1$. \\ \\
Having gathered these tools, we put $[\boldsymbol{0},\boldsymbol{y}) = [0,y^{(1)}) \times \ldots \times [0,y^{(s)}) \subset [0,1)^s$. The pertinence of introducing the integers $\tau_i$ will be revealed at a later step in Lemma 4.5. For a further analysis concerning $[\boldsymbol{0}, \boldsymbol{y})$, it turns out to be beneficial to consider a disjoint partitioning of this interval. To achieve the goal of a disjoint decomposition, a truncation of the one-dimensional interval borders $y_i$, of the form $ [y_i]_{\tau_i k_i} = \sum_{j=1}^{k_i} b_i^{-j\tau_i}$, $k_i \geq 1$, $i=1, \ldots, s$, is taken into account. Collecting the integers $k_i$ in a vector $\boldsymbol{k}= (k_1, \ldots, k_s)$ we arrive at 
\begin{equation}\label{eq:eq1}
[\boldsymbol{0}, \boldsymbol{y}) = \bigcup_{1 \leq k_1, \ldots, k_s \leq m} P_{\boldsymbol{k}}, \text{ with } P_{\boldsymbol{k}}: = \prod_{i=1}^s [[y_i]_{\tau_i k_i}-b_i^{-k_i \tau_i} ,[y_i]_{\tau_i k_i}).
\end{equation}  
We apply Lemma 4.2 to the interval $P_{\boldsymbol{k}}$ and obtain: 
\begin{lem}
An element $H_s(n)$ of the Halton sequence is contained in $P_{\boldsymbol{k}}$ if and only if $\phi_{b_i}(n) \in [[y_i]_{\tau_i k_i} - b_i^{-\tau_i k_i},[y_i]_{\tau_i k_i})$, for $i=1, \ldots,s$, or equivalently, 
\begin{equation}\label{eq:eq10} 
n \equiv \sum_{i=1}^s  M_{i,\boldsymbol{\tau} \cdot \boldsymbol{k}} B_{\boldsymbol{\tau} \cdot \boldsymbol{k}} b_i^{-\tau_i k_i} \dot{y}_{i,\tau_i (k_i-1)} \mod B_{\boldsymbol{\tau} \cdot \boldsymbol{k}}, \text{ where } \dot{y}_{i,\tau_i k_i}:=\sum_{j=1}^{k_i} b_i^{j\tau_i -1}. 
\end{equation} 
Note, that $\boldsymbol{\tau} = (\tau_1, \ldots, \tau_s)$ and the product $\boldsymbol{\tau} \cdot \boldsymbol{k}$ denotes the vector $(\tau_1 k_1, \ldots, \tau_s k_s)$.
\end{lem}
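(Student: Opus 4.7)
The first equivalence is essentially definitional. Since $P_{\boldsymbol{k}}$ is the Cartesian product
\[
P_{\boldsymbol{k}} = \prod_{i=1}^s [[y_i]_{\tau_i k_i} - b_i^{-\tau_i k_i},[y_i]_{\tau_i k_i})
\]
and $H_s(n) = (\phi_{b_1}(n),\ldots,\phi_{b_s}(n))$, the condition $H_s(n) \in P_{\boldsymbol{k}}$ immediately decomposes into the $s$ coordinatewise conditions $\phi_{b_i}(n) \in [[y_i]_{\tau_i k_i} - b_i^{-\tau_i k_i},[y_i]_{\tau_i k_i})$, $i=1,\ldots,s$.

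For the second equivalence, the plan is to apply Lemma 4.2 with $r_i = \tau_i k_i$. However, Lemma 4.2 is stated for intervals of the form $[[x_i]_{r_i},[x_i]_{r_i}+b_i^{-r_i})$, so I first need to rewrite the coordinatewise interval $[[y_i]_{\tau_i k_i} - b_i^{-\tau_i k_i},[y_i]_{\tau_i k_i})$ as such a truncation-based interval. The critical observation is that, by construction, $y_i = \sum_{j=1}^m b_i^{-j\tau_i}$ has $b_i$-adic digit equal to $1$ precisely at the positions $\tau_i, 2\tau_i,\ldots,m\tau_i$ and $0$ elsewhere. In particular, the $(\tau_i k_i)$-th digit is $1$, which gives
\[
[y_i]_{\tau_i k_i} - b_i^{-\tau_i k_i} = [y_i]_{\tau_i(k_i-1)} = \sum_{j=1}^{k_i-1}b_i^{-j\tau_i}.
\]
Thus setting $x_i := [y_i]_{\tau_i(k_i-1)}$, the coordinatewise interval becomes exactly $[[x_i]_{\tau_i k_i}, [x_i]_{\tau_i k_i} + b_i^{-\tau_i k_i})$.

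Now Lemma 4.2 applies directly and yields
\[
\phi_{b_i}(n)\in[[x_i]_{\tau_i k_i},[x_i]_{\tau_i k_i}+b_i^{-\tau_i k_i}),\ i=1,\ldots,s \ \Longleftrightarrow\ n\equiv \sum_{i=1}^s M_{i,\boldsymbol{\tau}\cdot\boldsymbol{k}} B_{\boldsymbol{\tau}\cdot\boldsymbol{k}} b_i^{-\tau_i k_i}\dot{x}_{i,\tau_i k_i}\ \mathrm{mod}\ B_{\boldsymbol{\tau}\cdot\boldsymbol{k}},
\]
where $\dot{x}_{i,\tau_i k_i} = \sum_{j=1}^{\tau_i k_i} x_{i,j}b_i^{j-1}$. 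Using the digit pattern of $x_i = [y_i]_{\tau_i(k_i-1)}$, which has digit $1$ exactly at positions $\tau_i, 2\tau_i,\ldots,(k_i-1)\tau_i$ and $0$ otherwise (up to position $\tau_i k_i$), I compute
\[
\dot{x}_{i,\tau_i k_i} = \sum_{l=1}^{k_i-1} b_i^{l\tau_i-1} = \dot{y}_{i,\tau_i(k_i-1)},
\]
which reproduces the claimed congruence verbatim.

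The proof is therefore essentially bookkeeping: the only non-mechanical point is the digit-level identification of the shifted endpoint $[y_i]_{\tau_i k_i} - b_i^{-\tau_i k_i}$ with the truncation $[y_i]_{\tau_i(k_i-1)}$, which relies crucially on the fact that $y_i$ was defined as a sum of pure $\tau_i$-spaced powers so that no borrows can occur. Once this rewriting is in place, the result follows immediately from Lemma 4.2. I do not anticipate any genuine obstacle beyond making the index-matching precise.
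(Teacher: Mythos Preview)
Your proposal is correct and follows exactly the paper's approach: the paper simply states that Lemma~4.3 is obtained by applying Lemma~4.2 to $P_{\boldsymbol{k}}$, without further detail. You have spelled out the bookkeeping---in particular the identification $[y_i]_{\tau_i k_i} - b_i^{-\tau_i k_i} = [y_i]_{\tau_i(k_i-1)}$ and the computation $\dot{x}_{i,\tau_i k_i} = \dot{y}_{i,\tau_i(k_i-1)}$---that the paper leaves implicit.
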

A slight reformulation of relation (\ref{eq:eq10}) is required. Although, by the previous lemma, we have found a criterion for a sequence element to be contained in $P_{\boldsymbol{k}}$, key steps of the proof of Theorem 2 will be based on a congruence of the form $n \equiv \tilde{y}_m + A_{\boldsymbol{k}} \mod B_{\boldsymbol{\tau} \cdot \boldsymbol{k}}$, with $\tilde{y}_m$ \textbf{independent} of $\boldsymbol{k}$ and $A_{\boldsymbol{k}}$ the least positive remainder modulo $B_{\boldsymbol{\tau} \cdot \boldsymbol{k}}$, i.e., 
\begin{equation*}
A_{\boldsymbol{k}}: \equiv  \sum_{i=1}^s -M_{i,\boldsymbol{\tau} \cdot \boldsymbol{k}} B_{\boldsymbol{\tau} \cdot \boldsymbol{k}} b_i^{-1} \mod  B_{\boldsymbol{\tau} \cdot \boldsymbol{k}} , \ A_{\boldsymbol{k}} \in [0, B_{\boldsymbol{\tau} \cdot \boldsymbol{k}}).
\end{equation*} 
This form is obtained as follows: We have 
\begin{align*}
& \sum_{i=1}^s  M_{i,\boldsymbol{\tau} \cdot \boldsymbol{k}} B_{\boldsymbol{\tau} \cdot \boldsymbol{k}} b_i^{-\tau_i k_i} \dot{y}_{i,\tau_i (k_i-1)} \\
&=\sum_{i=1}^s  M_{i,\boldsymbol{\tau} \cdot \boldsymbol{k}} B_{\boldsymbol{\tau} \cdot \boldsymbol{k}} b_i^{-\tau_i k_i} \dot{y}_{i,\tau_i k_i} - \sum_{i=1}^s M_{i,\boldsymbol{\tau} \cdot \boldsymbol{k}} B_{\boldsymbol{\tau} \cdot \boldsymbol{k}} b_i^{-1} \\
& \equiv \sum_{i=1}^s  M_{i,\boldsymbol{\tau} (m+1)} B_{\boldsymbol{\tau} (m+1)} b_i^{-\tau_i (m+1)} \dot{y}_{i,\boldsymbol{\tau}(m+1)} - \sum_{i=1}^s M_{i,\boldsymbol{\tau} \cdot \boldsymbol{k}} B_{\boldsymbol{\tau} \cdot \boldsymbol{k}} b_i^{-1} \\
& \equiv: \tilde{y}_m +  A_{\boldsymbol{k}} \mod B_{\boldsymbol{\tau} \cdot \boldsymbol{k}}. 
\end{align*}
Here $\tilde{y}_m$ is chosen such that $\tilde{y}_m \in [0, B_{\boldsymbol{\tau}(m+1)})$. The first of the congruences above follows by elementary computations. We summarize: 
\begin{equation*}
H_s(n) \in P_{\boldsymbol{k}} \Longleftrightarrow n \equiv \tilde{y}_m + A_{\boldsymbol{k}} \mod B_{\boldsymbol{\tau} \cdot \boldsymbol{k}}.
\end{equation*}  
Note that the multiplication $\boldsymbol{\tau} (m+1)$ has to be understood componentwise, i.e., we have $\boldsymbol{\tau} (m+1) = (\tau_1(m+1), \ldots, \tau_s(m+1))$. \\ \\
Employing the information received from Lemma 4.3, the equality   
\begin{equation*}
\sum_{n=N_1 B_{\boldsymbol{\tau} \cdot \boldsymbol{k}}}^{(N_1 +1) B_{\boldsymbol{\tau} \cdot \boldsymbol{k}} -1} (\chi_{P_{\boldsymbol{k}}}(H_s(n))- B_{\boldsymbol{\tau} \cdot \boldsymbol{k}}^{-1}) =0,
\end{equation*} holds for any integer $N_1 \geq 0$, since amongst $B_{\boldsymbol{\tau} \cdot \boldsymbol{k}}$ consecutive integers the congruence of relation (\ref{eq:eq10}) has exactly one solution. Moreover, for an integer $N_2 \in [0, B_{ \boldsymbol{\tau} \cdot \boldsymbol{k} })$, we have 
\begin{align}\label{eq:eq3}
 \sum_{n=\tilde{y}_m +N_1 B_{\boldsymbol{\tau} \cdot \boldsymbol{k} }}^{\tilde{y}_m +N_1 B_{\boldsymbol{\tau} \cdot \boldsymbol{k} }+ N_2 -1} (\chi_{P_{\boldsymbol{k}}}(H_s(n))- B_{\boldsymbol{\tau} \cdot \boldsymbol{k}}^{-1}) =\sum_{n \in [\tilde{y}_m,\tilde{y}_m +N_2)} (\chi_{P_{\boldsymbol{k}}}(H_s(n))- B_{\boldsymbol{\tau} \cdot \boldsymbol{k}}^{-1}).
\end{align} 
Recalling that 
\begin{align*}
& H_s(n) \in P_{\boldsymbol{k}} \Longleftrightarrow n \equiv \tilde{y}_m + A_{\boldsymbol{k}} \mod B_{\boldsymbol{\tau} \cdot \boldsymbol{k}} \Longleftrightarrow \\
&\exists \ l \in \mathbb{Z} \text{, such that } n = l B_{\boldsymbol{\tau} \cdot \boldsymbol{k}} + \tilde{y}_m + \underbrace{A_{\boldsymbol{k}}}_{\in [0,B_{\boldsymbol{\tau} \cdot \boldsymbol{k}})}, 
\end{align*}
the characteristic function in the sum (\ref{eq:eq3}) only has a nonzero contribution for $n=\tilde{y}_m + A_{\boldsymbol{k}}$, i.e., $l=0$, since for all other values of $l$, $n$ does not belong to the interval $[\tilde{y}_m,\tilde{y}_m +N_2)$. Hence, these arguments enable to restate (\ref{eq:eq3}) by the expression
\begin{align*}
\sum_{\substack{n \in [\tilde{y}_m,\tilde{y}_m +N_2) \\ n=\tilde{y}_m + A_{\boldsymbol{k}}}}  1 - N_2B_{\boldsymbol{\tau} \cdot \boldsymbol{k}}^{-1} &= 
\begin{cases}
1- N_2B_{\boldsymbol{\tau} \cdot \boldsymbol{k}}^{-1}, & \ 0 \leq  A_{\boldsymbol{k}} < N_2, \\
- N_2B_{\boldsymbol{\tau} \cdot \boldsymbol{k}}^{-1},  & \text{ else}.
\end{cases} \nonumber\\
&=\chi_{[0,N_2)}(A_{\boldsymbol{k}}) - N_2B_{\boldsymbol{\tau} \cdot \boldsymbol{k}}^{-1}.
\end{align*}
So far, we have constructed a special interval $[\boldsymbol{0},\boldsymbol{y})$, partitioned this box into subintervals and derived criteria to verify if some sequence element $H_s(n)$ is contained in a fixed box $P_{\boldsymbol{k}}$. To make the star-discrepancy of the Halton sequence sufficiently large, we additionally have to construct infinitely many values for $N$, which are bad in the sense that they yield (in combination with the special interval $[\boldsymbol{0},\boldsymbol{y})$) a large discrepancy. The decisive idea is to show the  existence of such $N$, rather to give an explicit construction. This consideration is realised by taking a quantity $\alpha_m$ into account, which represents the average of the discrepancy function, evaluated for the sequence elements $(H_s(n))_{ n=\tilde{y}_m}^{\tilde{y}_m + N -1}$ for several different values of $N$. Succeeding in showing that $\abs{\alpha_m} \geq c_s m^s$, with $c_s > 0$, would allow to conclude Theorem 2.      
\begin{lem}
Let 
\begin{equation*}
\alpha_m := \frac{1}{B_{\boldsymbol{\tau}m}} \sum_{N=1}^{B_{\boldsymbol{\tau}m}}  \Delta(\boldsymbol{y},(H_s(n))_{ n=\tilde{y}_m}^{\tilde{y}_m + N -1}),
\end{equation*} then 
\begin{equation}\label{eq:eq6}
\alpha_m = \sum_{1 \leq k_1, \ldots, k_s \leq m} \Big(\frac{1}{2} - \frac{A_{\boldsymbol{k}}}{B_{\boldsymbol{\tau} \cdot \boldsymbol{k}}}-\frac{1}{2 B_{\boldsymbol{\tau} \cdot \boldsymbol{k}}}\Big).
\end{equation}
\begin{proof}
We have
\begin{align*}
\alpha_m &= \frac{1}{B_{\boldsymbol{\tau}m}} \sum_{N=1}^{B_{\boldsymbol{\tau}m}} \Delta(\boldsymbol{y},(H_s(n))_{ n=\tilde{y}_m}^{\tilde{y}_m + N -1}) \\
 &= \sum_{1 \leq k_1, \ldots, k_s \leq m} \underbrace{\frac{1}{B_{\boldsymbol{\tau}m}} \sum_{N=1}^{B_{\boldsymbol{\tau}m}} \sum_{n=\tilde{y}_m}^{\tilde{y}_m + N -1} (\chi_{P_{\boldsymbol{k}}}(H_s(n))- B_{\boldsymbol{\tau} \cdot \boldsymbol{k}}^{-1})}_{=:\alpha_{m, \boldsymbol{k}}}. 
\end{align*}
The summands $\alpha_{m,\boldsymbol{k}}$ can be reformulated in the following way:
\allowdisplaybreaks 
\begin{align}\label{eq:eq4}
\alpha_{m,\boldsymbol{k}} &= \frac{1}{B_{\boldsymbol{\tau} m}} \sum_{N=1}^{B_{\boldsymbol{\tau} m}} \sum_{n=\tilde{y}_m}^{\tilde{y}_m + N -1} (\chi_{P_{\boldsymbol{k}}}(H_s(n))- B_{\boldsymbol{\tau} \cdot \boldsymbol{k}}^{-1}) \nonumber  \\
&=   \frac{1}{B_{\boldsymbol{\tau} m}} \sum_{N_1=0}^{B_{\boldsymbol{\tau} m}/B_{\boldsymbol{\tau} \cdot \boldsymbol{k}} -1} \sum_{N_2=1}^{B_{\boldsymbol{\tau} \cdot \boldsymbol{k}}}\Big( \underbrace{\sum_{n=\tilde{y}_m}^{\tilde{y}_m +N_1 B_{\boldsymbol{\tau} \cdot \boldsymbol{k} } -1} (\chi_{P_{\boldsymbol{k}}}(H_s(n))- B_{\boldsymbol{\tau} \cdot \boldsymbol{k}}^{-1})}_{=0} \nonumber \\
& + \underbrace{\sum_{n=\tilde{y}_m +N_1 B_{\boldsymbol{\tau} \cdot \boldsymbol{k} } }^{\tilde{y}_m +N_1 B_{\boldsymbol{\tau} \cdot \boldsymbol{k} }+ N_2 -1} (\chi_{P_{\boldsymbol{k}}}(H_s(n))- B_{\boldsymbol{\tau} \cdot \boldsymbol{k}}^{-1})}_{=\chi_{[0,N_2)}(A_{\boldsymbol{k}}) - N_2 B_{\boldsymbol{\tau} \cdot \boldsymbol{k}}^{-1}}\Big) \nonumber \\
&=  \frac{1}{B_{\boldsymbol{\tau} m}} \sum_{N_1=0}^{B_{\boldsymbol{\tau} m}/B_{\boldsymbol{\tau} \cdot \boldsymbol{k}} -1} \sum_{N_2=1}^{B_{\boldsymbol{\tau} \cdot \boldsymbol{k}}} (\chi_{[0,N_2)}(A_{\boldsymbol{k}}) - N_2 B_{\boldsymbol{\tau} \cdot \boldsymbol{k}}^{-1}) \nonumber  \\
&= \frac{1}{B_{\boldsymbol{\tau} \cdot \boldsymbol{k}}} \Big( \sum_{N_2=1}^{B_{\boldsymbol{\tau} \cdot \boldsymbol{k}}}\chi_{[0,N_2)}(A_{\boldsymbol{k}}) - \sum_{N_2=1}^{B_{\boldsymbol{\tau} \cdot \boldsymbol{k}}} N_2 B_{\boldsymbol{\tau} \cdot \boldsymbol{k}}^{-1}\Big).
\end{align}
By virtue of the fact that $A_{\boldsymbol{k}} \in [0,B_{\boldsymbol{\tau} \cdot \boldsymbol{k}})$ the first sum of (\ref{eq:eq4}) is not vanishing and simplifies to $B_{\boldsymbol{\tau} \cdot \boldsymbol{k}} - A_{\boldsymbol{k}}$. We therefore arrive at 
\begin{equation*}
\alpha_{m,\boldsymbol{k}} = \frac{1}{2} - \frac{A_{\boldsymbol{k}}}{B_{\boldsymbol{\tau} \cdot \boldsymbol{k}}}-\frac{1}{2 B_{\boldsymbol{\tau} \cdot \boldsymbol{k}}},
\end{equation*} and consequently
\begin{equation*}
\alpha_m = \sum_{1 \leq k_1, \ldots, k_s \leq m} \Big(\frac{1}{2} - \frac{A_{\boldsymbol{k}}}{B_{\boldsymbol{\tau} \cdot \boldsymbol{k}}}-\frac{1}{2 B_{\boldsymbol{\tau} \cdot \boldsymbol{k}}}\Big).
\end{equation*}
\end{proof}
\end{lem}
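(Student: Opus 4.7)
The plan is to exploit the partition~(\ref{eq:eq1}) together with the congruence criterion $H_s(n)\in P_{\boldsymbol{k}}\iff n\equiv\tilde{y}_m+A_{\boldsymbol{k}}\bmod B_{\boldsymbol{\tau}\cdot\boldsymbol{k}}$ stated just before the lemma. By linearity of $\Delta(\boldsymbol{y},\cdot)$ and the fact that $\lambda_s(P_{\boldsymbol{k}})=B_{\boldsymbol{\tau}\cdot\boldsymbol{k}}^{-1}$, interchanging the outer sum over $N$ with the sum over $\boldsymbol{k}$ writes $\alpha_m=\sum_{\boldsymbol{k}}\alpha_{m,\boldsymbol{k}}$, where $\alpha_{m,\boldsymbol{k}}$ is the analogous expression with $[\boldsymbol{0},\boldsymbol{y})$ replaced by the single box $P_{\boldsymbol{k}}$. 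The claim then reduces to the per-box identity
\[
\alpha_{m,\boldsymbol{k}}=\tfrac{1}{2}-\frac{A_{\boldsymbol{k}}}{B_{\boldsymbol{\tau}\cdot\boldsymbol{k}}}-\frac{1}{2B_{\boldsymbol{\tau}\cdot\boldsymbol{k}}}.
\]

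To prove this, I would observe that $B_{\boldsymbol{\tau}\cdot\boldsymbol{k}}\mid B_{\boldsymbol{\tau}m}$ (since $k_i\le m$), and parametrise $N=N_1 B_{\boldsymbol{\tau}\cdot\boldsymbol{k}}+N_2$ with $0\le N_1<B_{\boldsymbol{\tau}m}/B_{\boldsymbol{\tau}\cdot\boldsymbol{k}}$ and $1\le N_2\le B_{\boldsymbol{\tau}\cdot\boldsymbol{k}}$. The inner summation over $n$ then splits into a full-period portion of length $N_1 B_{\boldsymbol{\tau}\cdot\boldsymbol{k}}$ and a tail of length $N_2$. The full-period portion vanishes, since any $B_{\boldsymbol{\tau}\cdot\boldsymbol{k}}$ consecutive integers $n$ contain exactly one solution of $n\equiv\tilde{y}_m+A_{\boldsymbol{k}}\bmod B_{\boldsymbol{\tau}\cdot\boldsymbol{k}}$, which balances the $B_{\boldsymbol{\tau}\cdot\boldsymbol{k}}$ subtracted copies of $B_{\boldsymbol{\tau}\cdot\boldsymbol{k}}^{-1}$. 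The tail contributes exactly $\chi_{[0,N_2)}(A_{\boldsymbol{k}})-N_2 B_{\boldsymbol{\tau}\cdot\boldsymbol{k}}^{-1}$, because the unique candidate $n=\tilde{y}_m+N_1 B_{\boldsymbol{\tau}\cdot\boldsymbol{k}}+A_{\boldsymbol{k}}$ falls in the window of length $N_2$ if and only if $A_{\boldsymbol{k}}<N_2$, using that $A_{\boldsymbol{k}}\in[0,B_{\boldsymbol{\tau}\cdot\boldsymbol{k}})$.

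The sum over $N_1$ produces a factor $B_{\boldsymbol{\tau}m}/B_{\boldsymbol{\tau}\cdot\boldsymbol{k}}$ which cancels with part of the prefactor $1/B_{\boldsymbol{\tau}m}$, leaving
\[
\alpha_{m,\boldsymbol{k}}=\frac{1}{B_{\boldsymbol{\tau}\cdot\boldsymbol{k}}}\sum_{N_2=1}^{B_{\boldsymbol{\tau}\cdot\boldsymbol{k}}}\bigl(\chi_{[0,N_2)}(A_{\boldsymbol{k}})-N_2 B_{\boldsymbol{\tau}\cdot\boldsymbol{k}}^{-1}\bigr).
\]
The first summand collapses to $B_{\boldsymbol{\tau}\cdot\boldsymbol{k}}-A_{\boldsymbol{k}}$ (because $A_{\boldsymbol{k}}\in[0,B_{\boldsymbol{\tau}\cdot\boldsymbol{k}})$), while the second is the arithmetic progression $(B_{\boldsymbol{\tau}\cdot\boldsymbol{k}}+1)/2$; substituting and simplifying yields the desired closed form, and summing over $\boldsymbol{k}$ produces~(\ref{eq:eq6}).

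The only genuinely delicate point is verifying that the full-period contribution really vanishes, which relies simultaneously on the divisibility $B_{\boldsymbol{\tau}\cdot\boldsymbol{k}}\mid B_{\boldsymbol{\tau}m}$ and on the uniqueness of the solution of the defining congruence modulo $B_{\boldsymbol{\tau}\cdot\boldsymbol{k}}$; once both are in hand, the remainder of the argument is careful bookkeeping of three nested summations and I do not anticipate a substantial obstacle.
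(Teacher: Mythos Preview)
Your proposal is correct and follows essentially the same route as the paper's proof: decompose $\alpha_m$ over the boxes $P_{\boldsymbol{k}}$, write $N=N_1 B_{\boldsymbol{\tau}\cdot\boldsymbol{k}}+N_2$, observe that the full-period part vanishes while the tail contributes $\chi_{[0,N_2)}(A_{\boldsymbol{k}})-N_2 B_{\boldsymbol{\tau}\cdot\boldsymbol{k}}^{-1}$, and then evaluate the resulting sum over $N_2$. The only (welcome) additions in your write-up are the explicit remarks that $\lambda_s(P_{\boldsymbol{k}})=B_{\boldsymbol{\tau}\cdot\boldsymbol{k}}^{-1}$ and that $B_{\boldsymbol{\tau}\cdot\boldsymbol{k}}\mid B_{\boldsymbol{\tau}m}$, which the paper uses without stating.
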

\begin{lem}
Let $\alpha_m$ be defined as in the previous lemma. Then we have 
\begin{equation*}
\abs{\alpha_m} \geq c_s m^s, \text{ with } c_s > 0. 
\end{equation*}
\begin{proof}
For simplicity reasons, we will prove this lemma only for the two-dimensional Halton sequence in bases $b_1=2$ and $b_2=3$. The general case works analogously with a bit more technical effort. To estimate the absolute value of $\alpha_m$ from below, we investigate the three occurring sums in (\ref{eq:eq6}) separately. We have $ \sum_{1 \leq k_1,k_2 \leq m} \frac{1}{2} = \frac{m^2}{2}$. The definition of $A_{\boldsymbol{k}}$ gives
\begin{equation}\label{eq:eq7}
\frac{A_{\boldsymbol{k}}}{B_{\boldsymbol{\tau} \cdot \boldsymbol{k}}} \equiv   - \sum_{i=1}^2 \frac{M_{i,\boldsymbol{\tau} \cdot \boldsymbol{k}} B_{\boldsymbol{\tau} \cdot \boldsymbol{k}} b_i^{-1}}{ B_{\boldsymbol{\tau} \cdot \boldsymbol{k}}}  \mod 1,
\end{equation}and therefore it is necessary to examine the expression $M_{i,\boldsymbol{\tau} \cdot \boldsymbol{k}} b_i^{-1} \mod 1$ in detail. According to the choice of the integer $M_{i,\boldsymbol{\tau} \cdot \boldsymbol{k}}$ and $\tau_i$, we obtain in our special case:
\begin{equation*}
M_{1,\boldsymbol{\tau} \cdot \boldsymbol{k}} 3^{k_2} \equiv 1 \mod 2^{2k_1},
\end{equation*}
hence
\begin{equation*}
M_{1,\boldsymbol{\tau} \cdot \boldsymbol{k}} 3^{k_2}  \equiv 1 \mod 2
\end{equation*}
and consequently 
\begin{equation*}
 M_{1,\boldsymbol{\tau} \cdot \boldsymbol{k}}  \equiv 1 \mod 2.
\end{equation*}
Further 
\begin{equation*}
M_{2,\boldsymbol{\tau} \cdot \boldsymbol{k}} 2^{2k_1} \equiv 1 \mod 3^{k_2},
\end{equation*}
hence
\begin{equation*}
M_{2,\boldsymbol{\tau} \cdot \boldsymbol{k}} 2^{2k_1}  \equiv 1 \mod 3
\end{equation*}
and consequently 
\begin{equation*}
 M_{2,\boldsymbol{\tau} \cdot \boldsymbol{k}}  \equiv 1 \mod 3.
\end{equation*}
Combining this result with~\eqref{eq:eq7} yields  
\begin{equation*}
\frac{A_{\boldsymbol{k}}}{B_{\boldsymbol{\tau} \cdot \boldsymbol{k}}} \equiv - \frac{1}{b_1} - \frac{1}{b_2} = -\frac{1}{2}-\frac{1}{3} \mod 1 = 1-\frac{1}{2}-\frac{1}{3} = \frac{1}{6}.
\end{equation*}
Summing up the reformulated addends of equation~\eqref{eq:eq6}, gives
\begin{align*}
\abs{\alpha_m} = \abs{ m^2 \Big(\frac{1}{2} - \frac{1}{6} \Big) - \sum_{1 \leq k_1,k_2 \leq m} \frac{1}{2 B_{\boldsymbol{\tau} \cdot \boldsymbol{k}}}} \geq c_2 m^2, \text{ with } c_2 > 0,
\end{align*} 
and $m$ sufficiently large.
\end{proof}
\end{lem}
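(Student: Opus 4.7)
My strategy would be to exploit the explicit formula (\ref{eq:eq6}) from Lemma 4.4 and estimate each of its three summands individually. The first contribution is trivially $\sum_{1 \leq k_1, \ldots, k_s \leq m} \frac{1}{2} = \frac{m^s}{2}$, while the third contribution factors as $\frac{1}{2} \prod_{i=1}^s \sum_{k_i=1}^m b_i^{-\tau_i k_i}$, a bounded geometric product and therefore $O(1)$ in $m$. The real work lies in understanding the middle term $\sum_{\boldsymbol{k}} A_{\boldsymbol{k}}/B_{\boldsymbol{\tau} \cdot \boldsymbol{k}}$.

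The key observation I would try to prove is that, despite the complicated dependence of $A_{\boldsymbol{k}}$ on $\boldsymbol{k}$, the value $A_{\boldsymbol{k}} / B_{\boldsymbol{\tau} \cdot \boldsymbol{k}}$ is actually \emph{independent} of $\boldsymbol{k}$ modulo $1$. Starting from the definition of $A_{\boldsymbol{k}}$, one directly sees that $A_{\boldsymbol{k}} / B_{\boldsymbol{\tau}\cdot\boldsymbol{k}} \equiv -\sum_{i=1}^s M_{i,\boldsymbol{\tau}\cdot\boldsymbol{k}}/b_i \pmod{1}$, so it suffices to determine the residue of $M_{i,\boldsymbol{\tau}\cdot\boldsymbol{k}}$ modulo $b_i$. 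Reducing the defining congruence $M_{i,\boldsymbol{\tau}\cdot\boldsymbol{k}} \prod_{j \neq i} b_j^{\tau_j k_j} \equiv 1 \pmod{b_i^{\tau_i k_i}}$ modulo $b_i$, the crucial ingredient becomes the defining property of $\tau_j$: since $b_j^{\tau_j} \equiv 1 \pmod{B/b_j}$ and $b_i \mid B/b_j$ for every $j \neq i$ (bases are pairwise coprime), we obtain $b_j^{\tau_j} \equiv 1 \pmod{b_i}$ for every $j \neq i$. Consequently $\prod_{j\neq i} b_j^{\tau_j k_j} \equiv 1 \pmod{b_i}$ and hence $M_{i,\boldsymbol{\tau}\cdot\boldsymbol{k}} \equiv 1 \pmod{b_i}$, so that $M_{i,\boldsymbol{\tau}\cdot\boldsymbol{k}}/b_i \equiv 1/b_i \pmod{1}$.

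Feeding this back gives $A_{\boldsymbol{k}}/B_{\boldsymbol{\tau}\cdot\boldsymbol{k}} \equiv -\sum_{i=1}^s 1/b_i \pmod{1}$, a constant in $[0,1)$ whose explicit value equals $1 - \{\sum_i 1/b_i\}$ in the regime where $\sum_i 1/b_i < 1$. Substituting into (\ref{eq:eq6}) collapses the middle sum to $m^s$ copies of this constant, and collecting everything yields $\alpha_m = m^s \bigl(\{\sum_i 1/b_i\} - \tfrac{1}{2}\bigr) + O(1)$. Provided the coefficient is nonzero, we immediately obtain $|\alpha_m| \geq c_s m^s$ for $m$ sufficiently large. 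For the showcase base pair $(b_1,b_2) = (2,3)$, the coefficient becomes $\tfrac{5}{6} - \tfrac{1}{2} = \tfrac{1}{3}$, giving the clean estimate $|\alpha_m| \geq c_2 m^2$.

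The step that I expect to be the main obstacle is \emph{not} the arithmetic reduction itself (which is the clever but localised observation about $\tau_j$), but rather the need to verify that the leading coefficient $\{\sum_i 1/b_i\} - \tfrac{1}{2}$ does not accidentally vanish; in higher dimensions this becomes a mild but case-dependent number-theoretic check on the chosen bases. For the two-dimensional presentation chosen in this survey the verification is trivial, and I would therefore follow the authors in carrying out the explicit calculation for $(b_1, b_2) = (2,3)$ and merely remark that the general case proceeds identically once the non-degeneracy of the leading coefficient is confirmed.
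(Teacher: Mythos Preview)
Your proposal is correct and follows essentially the same route as the paper: both arguments reduce $A_{\boldsymbol{k}}/B_{\boldsymbol{\tau}\cdot\boldsymbol{k}}$ to a constant by showing $M_{i,\boldsymbol{\tau}\cdot\boldsymbol{k}}\equiv 1\pmod{b_i}$ via the defining property of $\tau_j$, then read off the leading $m^s$-coefficient. The only difference is cosmetic---the paper carries out the computation explicitly for $(b_1,b_2)=(2,3)$ whereas you keep general $s$ and correctly flag the non-vanishing of $\{\sum_i 1/b_i\}-\tfrac12$ as the residual check.
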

This estimate gives us the necessary tools to conclude Theorem 2. \\ \\
\textbf{Proof of Theorem 2:} \\
From the definition of $\alpha_m$ (see formulation of Lemma 4.4) and from Lemma 4.5 we conclude that for every $m$ there is an $N$ with $1 \leq N \leq B_{\boldsymbol{\tau} m}$ such that 
\begin{equation*}
\abs{\Delta(\boldsymbol{y},(H_s(n))_{ n=\tilde{y}_m}^{\tilde{y}_m + N -1})} \geq c_s m^s.
\end{equation*}
Hence, 
\begin{equation*}
\abs{\Delta(\boldsymbol{y},(H_s(n))_{ n=0}^{\tilde{y}_m -1})} \geq \frac{c_s}{2} m^s \ \lor \ \abs{\Delta(\boldsymbol{y},(H_s(n))_{ n=0}^{\tilde{y}_m + N -1})} \geq \frac{c_s}{2} m^s.
\end{equation*}
Assume, the second estimate holds (the other case is treated analogously) and set $N_m:=\tilde{y}_m + N$, i.e., 
\begin{equation*}
\abs{\Delta(\boldsymbol{y},(H_s(n))_{n=0}^{N_m-1})} \geq \frac{c_s}{2} m^s.
\end{equation*}
Now note that 
\begin{equation*}
N_m = \tilde{y}_m + N \leq B_{\boldsymbol{\tau}(m+1)} + B_{\boldsymbol{\tau}m} \leq B^{3m(\tau_1 + \ldots + \tau_s)},
\end{equation*}
i.e.: 
\begin{equation*}
m \geq \frac{\log N_m}{\log B^{3(\tau_1 + \ldots + \tau_s)}},
\end{equation*}
and therefore 
\begin{equation*}
\abs{\Delta(\boldsymbol{y},(H_s(n))_{n=0}^{N_m-1})} \geq \frac{c_s}{2(\log B^{3(\tau_1+ \ldots + \tau_s)})^s} (\log N_m)^s.
\end{equation*}
It can easily be argued that we can obtain infinitely many such $N_m$, with this property and the result follows. \qed
\section{Proof of Theorem 5}
The investigations of the current section are restricted to the two-dimensional Halton sequence in bases $b_1=2$ and $b_2=3$. In the following, we survey possible options to modify the intervals $[0,y^{(1)})$ and $[0,y^{(2)})$, and discuss whether these changes still allow to derive the estimate $\abs{\alpha_m} \geq c_2 m^2$ or not. A way to obtain further possible values for $y^{(1)}$ or $y^{(2)}$ would be to remove some addends of the specification of $y^{(1)}$ or $y^{(2)}$, i.e., to consider for example 
\begin{equation*}
\tilde{y}^{(1)} = \sum_{\substack{j=1 \\ j \neq  l}}^{m} 2^{-j \tau_1}, \text{or } \tilde{y}^{(2)} = \sum_{\substack{j=1 \\ j \neq  l}}^{m} 3^{-j \tau_2}, \text{ with } l \in \mathbb{N} \text{ and } 1 \leq l \leq m. 
\end{equation*}
Recalling equation (\ref{eq:eq6}), the choice of the modified box $[0,\tilde{y}^{(1)}) \times [0,y^{(2)})$ would have the consequence that (\ref{eq:eq6}) amounts to 
\begin{equation*}
\alpha_m = \sum_{\substack{1 \leq k_1,k_2 \leq m \\ k_1 \neq l}} \Big(\frac{1}{2} - \frac{A_{\boldsymbol{k}}}{B_{\boldsymbol{\tau} \cdot \boldsymbol{k}}}-\frac{1}{2 B_{\boldsymbol{\tau} \cdot \boldsymbol{k}}}\Big).
\end{equation*} 
Note, that all previous steps of the proof of Theorem 2 can easily be adapted to this modified choice of the axes-parallel box. Since $k_1$ only takes on $(m-1)$ different values, we get
\begin{equation*}
\alpha_m = \frac{1}{3} m(m-1) - \sum_{\substack{1 \leq k_1,k_2 \leq m \\ k_1 \neq l}} \frac{1}{2 B_{\boldsymbol{\tau} \cdot \boldsymbol{k}}}
\end{equation*}
and therefore we are still in the position to derive a lower bound for $\abs{\alpha_m}$ of the form $c_2 m^2$. The next corollary focuses on the questions of how many addends can be removed from the representation of $y^{(1)}$ (or $y^{(2)}$).  
\begin{cor}
Let $\epsilon > 0$ and fix an $m > \hat{c}_2(\epsilon)$, with a sufficiently large constant $\hat{c}_2(\epsilon)$. If we remove at most $m(1-\epsilon)$ addends from the representation of $y^{(1)}$ ($y^{(2)}$), while $y^{(2)}$ ($y^{(1)}$) remains unchanged, then we still have $\abs{\alpha_m} \geq  c_2(\epsilon) m^2$, with $c_2(\epsilon)>0$.
\end{cor}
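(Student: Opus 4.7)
The plan is to follow the argument of Lemmas 4.4 and 4.5 line by line with the modified choice of $y^{(1)}$, tracking exactly where the combinatorial structure is affected by removing addends. Let $L \subseteq \{1,\dots,m\}$ with $|L| \le m(1-\epsilon)$ denote the set of removed indices, so that $\tilde y^{(1)} = \sum_{j \in \{1,\dots,m\}\setminus L} 2^{-j\tau_1}$ while $y^{(2)}$ is left untouched. The disjoint decomposition of $[0,\tilde y^{(1)}) \times [0, y^{(2)})$ built as in the proof of Theorem~2 consists exactly of the boxes $P_{\boldsymbol{k}}$ with $k_1 \in \{1,\dots,m\}\setminus L$ and $1 \leq k_2 \leq m$, since the subintervals associated with $k_1 \in L$ now have zero width and drop out.

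The key observation is that the quantities $B_{\boldsymbol{\tau} \cdot \boldsymbol{k}}$, $M_{i,\boldsymbol{\tau} \cdot \boldsymbol{k}}$, and therefore $A_{\boldsymbol{k}}$ depend only on $\boldsymbol{k}$ and on the bases $b_1=2$, $b_2=3$, and not on which digits of $y^{(1)}$ happen to be present. Thus the manipulations leading to Lemma~4.4 transfer verbatim; only the outer summation is restricted, giving
\begin{equation*}
\alpha_m \;=\; \sum_{\substack{1 \leq k_1, k_2 \leq m \\ k_1 \notin L}} \Bigl(\tfrac{1}{2} - \tfrac{A_{\boldsymbol{k}}}{B_{\boldsymbol{\tau}\cdot\boldsymbol{k}}} - \tfrac{1}{2 B_{\boldsymbol{\tau}\cdot\boldsymbol{k}}}\Bigr).
\end{equation*}
The arithmetic identification $A_{\boldsymbol{k}}/B_{\boldsymbol{\tau}\cdot\boldsymbol{k}} = 1/6$ from the proof of Lemma~4.5 is a statement purely about $M_{1,\boldsymbol{\tau}\cdot\boldsymbol{k}} \bmod 2$ and $M_{2,\boldsymbol{\tau}\cdot\boldsymbol{k}} \bmod 3$, and so it remains valid in the present setting without any change.

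From here the lower bound reduces to counting. The principal term contributes $(\tfrac12 - \tfrac16)(m-|L|)m = \tfrac13(m-|L|)m \geq \tfrac{\epsilon}{3} m^2$, while the correction $\sum 1/(2B_{\boldsymbol{\tau}\cdot\boldsymbol{k}})$ is dominated by the convergent double geometric series $\sum_{k_1,k_2 \geq 1} 2^{-2k_1-1}\, 3^{-k_2}$, which sums to an absolute constant $C$. Hence $|\alpha_m| \geq \tfrac{\epsilon}{3} m^2 - C$, and taking $\hat{c}_2(\epsilon) := \sqrt{6C/\epsilon}$ forces $|\alpha_m| \geq \tfrac{\epsilon}{6} m^2$ for all $m > \hat{c}_2(\epsilon)$. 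The passage from the $\alpha_m$-bound to the claimed discrepancy estimate of order $(\log N)^2/N$ proceeds exactly as in the proof of Theorem~2, and the symmetric case of removal from $y^{(2)}$ is entirely analogous. There is no genuine obstacle: the whole argument is insensitive to which specific digits appear in $y^{(1)}$, as long as a proportion at least $\epsilon$ of the $m$ positions survives, because the numbers $A_{\boldsymbol{k}}$ that drive the quadratic growth of $\alpha_m$ are intrinsic to the bases rather than to the choice of $y$.
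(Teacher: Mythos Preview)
Your proposal is correct and follows exactly the approach the paper intends: the paper itself only treats the single-removal case in the discussion preceding the corollary (obtaining $\alpha_m = \tfrac{1}{3}m(m-1) - \sum \tfrac{1}{2B_{\boldsymbol{\tau}\cdot\boldsymbol{k}}}$) and then states the corollary without further proof, so your argument is precisely the natural extension. The only superfluous part is your final sentence about passing to the discrepancy estimate, since the corollary as stated concerns only $|\alpha_m|$.
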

Up to now we have only modified $y^{(1)}$ ($y^{(2)}$) and kept $y^{(2)}$ ($y^{(1)}$) unchanged. If we remove addends from the representation of $y^{(1)}$ and from the one of $y^{(2)}$, we obtain the following corollary.
\begin{cor}
Let $\epsilon > 0$ and fix an $m > \hat{c}_3(\epsilon)$, with a sufficiently large constant $\hat{c}_3(\epsilon)$. If we remove at most $m(1-\epsilon)$ addends from the representation of $y^{(1)}$ and $y^{(2)}$ then we still have $\abs{\alpha_m} \geq  c_3(\epsilon) m^2$, with $c_3(\epsilon)>0$. 
\end{cor}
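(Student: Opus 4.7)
The plan is to follow the derivation of Theorem 2 verbatim with the modified intervals $[0,\tilde{y}^{(1)}) \times [0,\tilde{y}^{(2)})$, where $\tilde{y}^{(1)} = \sum_{j \in J_1} 2^{-j\tau_1}$ and $\tilde{y}^{(2)} = \sum_{j \in J_2} 3^{-j\tau_2}$ with $J_1, J_2 \subseteq \{1,\ldots,m\}$ satisfying $|J_i| \geq m\epsilon$. First I would check that the disjoint decomposition from (\ref{eq:eq1}) still goes through in the form $[0,\tilde{y}^{(1)}) \times [0,\tilde{y}^{(2)}) = \bigcup_{k_1 \in J_1,\,k_2 \in J_2} P_{\boldsymbol{k}}$, since the truncation $[\tilde{y}^{(i)}]_{\tau_i k_i}$ only grows strictly when $k_i \in J_i$, so indices outside $J_i$ yield empty subintervals. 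The computation (\ref{eq:eq6}) then becomes
\begin{equation*}
\alpha_m = \sum_{\substack{k_1 \in J_1 \\ k_2 \in J_2}} \Big(\frac{1}{2} - \frac{A_{\boldsymbol{k}}}{B_{\boldsymbol{\tau} \cdot \boldsymbol{k}}} - \frac{1}{2 B_{\boldsymbol{\tau} \cdot \boldsymbol{k}}}\Big).
\end{equation*}

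The crucial observation is that the argument from the proof of Lemma 4.5 showing $A_{\boldsymbol{k}}/B_{\boldsymbol{\tau} \cdot \boldsymbol{k}} \equiv 1/6 \mod 1$ depends only on the residues $M_{i,\boldsymbol{\tau} \cdot \boldsymbol{k}} \equiv 1 \mod b_i$, which are a consequence of the bases $b_1 = 2$ and $b_2 = 3$ alone and involve no property of $J_1$ or $J_2$. Hence this identity carries over unchanged, and the leading contribution of each summand remains $\frac{1}{2} - \frac{1}{6} = \frac{1}{3}$. Since $|J_1|\,|J_2| \geq \epsilon^2 m^2$, the main term yields $\frac{\epsilon^2 m^2}{3}$ in absolute value, while the correction is bounded uniformly in $m$ by the convergent series $\sum_{k_1,k_2 \geq 1} \frac{1}{2 \cdot 2^{2 k_1} 3^{k_2}} = \frac{1}{12}$. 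Therefore, once $m$ exceeds a threshold $\hat{c}_3(\epsilon)$ that makes $\epsilon^2 m^2/3$ dominate this bounded error, one obtains $\abs{\alpha_m} \geq c_3(\epsilon) m^2$ with, for instance, $c_3(\epsilon) = \epsilon^2/6$, and the passage from $\alpha_m$ to the final discrepancy estimate is inherited verbatim from the proof of Theorem 2.

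The only (modest) technical point I would pay attention to is that the reformulation $n \equiv \tilde{y}_m + A_{\boldsymbol{k}} \mod B_{\boldsymbol{\tau} \cdot \boldsymbol{k}}$ preserves its shape for the modified $\tilde{y}^{(i)}$. This in turn hinges on the identity $\dot{\tilde{y}}_{i,\tau_i k_i} - \dot{\tilde{y}}_{i,\tau_i (k_i-1)} = b_i^{k_i \tau_i -1}$, which is valid precisely because the outer sum runs over $k_i \in J_i$, i.e., over indices for which the addend $b_i^{-k_i \tau_i}$ is indeed present in $\tilde{y}^{(i)}$; for $k_i \notin J_i$ the term is simply absent and contributes nothing, consistent with the fact that the corresponding $P_{\boldsymbol{k}}$ is empty. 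Once this bookkeeping is in place, the remainder of the proof is literally a copy of Theorem 2 restricted to the product set $J_1 \times J_2$.
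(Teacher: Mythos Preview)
Your argument is correct and matches the paper's intended approach: Corollary~5.2 is stated without a separate proof precisely because the discussion preceding Corollary~5.1 (restricting the index set in (\ref{eq:eq6}) and reusing the residue computation $A_{\boldsymbol k}/B_{\boldsymbol\tau\cdot\boldsymbol k}\equiv 1/6$ from Lemma~4.5) extends to both coordinates in the way you describe. Your explicit bookkeeping around the reformulation $n\equiv\tilde y_m+A_{\boldsymbol k}$ and the identity $\dot{\tilde y}_{i,\tau_i k_i}-\dot{\tilde y}_{i,\tau_i(k_i-1)}=b_i^{k_i\tau_i-1}$ for $k_i\in J_i$ goes a step beyond what the paper writes out but is entirely consistent with it.
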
 
Based on these preliminary considerations, we will derive the following lemma, which states, that there are, in some sense, many feasible choices for the interval borders $y^{(1)}$ and $y^{(2)}$.    
\begin{lem} 
Let $m$ be sufficiently large (as in Corollary 5.2). Then, there is a set $\Upsilon \subseteq [0,1)^2$ with the following property: 
For all $\boldsymbol{x} \in [0,1)^2$ there exists a $\boldsymbol{y} \in \Upsilon$ with 
\begin{equation*}
\| \boldsymbol{x} - \boldsymbol{y} \| < \sqrt{8} \frac{1}{2^{m/2}}.
\end{equation*}
Furthermore, for such a $\boldsymbol{y}$, we have $\abs{\alpha_m} \geq c_2 m^2$, with some constant $c_2 >0$.
\begin{proof}
Let $y^{(1)} = 0.\underbrace{010101 \ldots 01}_{2m}$ in base 2, and $y^{(2)}=0.\underbrace{11 \ldots 1}_{m}$ in base 3, the original choice of the interval borders of the two-dimensional box $[0,y^{(1)}) \times [0,y^{(2)})$. We now consider modified interval borders of the form $\tilde{y}^{(1)} = 0.\underbrace{a_1 \ldots a_{l_1} 0101 \ldots 01}_{2m}$, with $a_1, \ldots, a_{l_1} \in \lbrace 0,1 \rbrace$ and $\tilde{y}^{(2)} = 0.\underbrace{b_1 \ldots b_{l_2} 11 \ldots 11}_{m}$, with $b_1, \ldots, b_{l_2} \in \lbrace 0,1,2 \rbrace$. The question is of course, how large $l_1 = l_1(m)$ and $l_2 = l_2(m)$ can be chosen for a given $m$, such that we still have $\abs{\alpha_m} \geq c_2 m^2$ for this modified choice of the interval. The set $\Upsilon$ is then defined as the set of all feasible choices of $(\tilde{y}^{(1)},\tilde{y}^{(2)})$. Let $\tilde{k}_1^{(i)}$ and $\tilde{k}_1^{(i-1)}$ $\leq l_1/2$ be integers, for which $a_{2\tilde{k}_1^{(i)}} = a_{2\tilde{k}_1^{(i-1)}} = 1$. If one of the digits $a_{2 \tilde{k}_1^{(i-1)} + 1}, \ldots, a_{2 \tilde{k}_1^{(i)} -1}$ is one, we split an interval of the form 
\begin{equation*}
[[\tilde{y}^{(1)}]_{2\tilde{k}_1^{(i-1)}}, [\tilde{y}^{(1)}]_{2\tilde{k}_1^{(i)}})
\end{equation*}
into the two disjoint intervals
\begin{equation*}
[[\tilde{y}^{(1)}]_{2\tilde{k}_1^{(i-1)}}, [\tilde{y}^{(1)}]_{2\tilde{k}_1^{(i)}} - 2^{- 2\tilde{k}_1^{(i)}}) \ \land \ [ [\tilde{y}^{(1)}]_{2\tilde{k}_1^{(i)}} - 2^{-2\tilde{k}_1^{(i)}}, [\tilde{y}^{(1)}]_{2\tilde{k}_1^{(i)}}). 
\end{equation*}
Now, let $\tilde{k}_2^{(i)}$ $\leq l_2$, be an integer, for which $b_{\tilde{k}_2^{(i)}} = 2$.
Then, we split an interval of the form  
\begin{equation*}
[[\tilde{y}^{(2)}]_{\tilde{k}_2^{(i)}} - 2 \cdot 3^{-\tilde{k}_2^{(i)}}, [\tilde{y}^{(2)}]_{\tilde{k}_2^{(i)}})
\end{equation*} 
into the two disjoint intervals
\begin{equation*}
[[\tilde{y}^{(2)}]_{\tilde{k}_2^{(i)}} - 2 \cdot 3^{-\tilde{k}_2^{(i)}},[\tilde{y}^{(2)}]_{\tilde{k}_2^{(i)}} - 3^{-\tilde{k}_2^{(i)}}) \ \land \ [[\tilde{y}^{(2)}]_{\tilde{k}_2^{(i)}} - 3^{-\tilde{k}_2^{(i)}},[\tilde{y}^{(2)}]_{\tilde{k}_2^{(i)}}).
\end{equation*}
We investigate the influence of this additional interval on the quantity $\alpha_m$. Therefore, we consider the average of the discrepancy function for the interval 
\begin{equation*}
J_1 = [[\tilde{y}^{(1)}]_{2\tilde{k}_1^{(i-1)}}, [\tilde{y}^{(1)}]_{2\tilde{k}_1^{(i)}} - 2^{-2\tilde{k}_1^{(i)}}) \times [0,\tilde{y}^{(2)}),
\end{equation*}
\allowdisplaybreaks 
i.e., we study: 
\begin{align}
& \tilde{\alpha}^{(1)}_m = \frac{1}{B_{\boldsymbol{\tau}m}} \sum_{N=1}^{B_{\boldsymbol{\tau}m}} \Big( \sum_{n=\tilde{y}_m}^{\tilde{y}_m + N -1 } \chi_{J_1}(H_s(n)) - N \lambda_2(J_1)  \Big) \nonumber \\
& = \frac{1}{B_{\boldsymbol{\tau}m}} \sum_{N=1}^{B_{\boldsymbol{\tau}m}} \Big( \sum_{n=\tilde{y}_m}^{\tilde{y}_m + N -1 } \chi_{J_1}(H_s(n)) \Big) - \frac{B_{\boldsymbol{\tau} m}+1}{2} \Big( \sum_{j=2\tilde{k}_1^{(i-1)} +1 }^{2\tilde{k}_1^{(i)} - 1} \frac{a_j}{2^j} \Big( \sum_{i=1}^{l_2} \frac{b_i}{3^i} + \sum_{i=l_2+1}^{m} \frac{1}{3^i} \Big) \Big) \nonumber \\
& \geq \frac{1}{B_{\boldsymbol{\tau}m}} \sum_{N=1}^{B_{\boldsymbol{\tau}m}} \Big(\sum_{j=2\tilde{k}_1^{(i-1)} +1 }^{2\tilde{k}_1^{(i)} - 1} \sum_{i=1}^{l_2} a_jb_i \Big\lfloor \frac{N}{2^{j}3^i} \Big\rfloor + \sum_{j=2\tilde{k}_1^{(i-1)} +1 }^{2\tilde{k}_1^{(i)} - 1} \sum_{i=l_2+1}^{m} a_j\Big\lfloor \frac{N}{2^{j}3^i} \Big\rfloor  \Big) \nonumber \\
& - \frac{B_{\boldsymbol{\tau} m}+1}{2} \Big( \sum_{j=2\tilde{k}_1^{(i-1)} +1 }^{2\tilde{k}_1^{(i)} -1} \frac{a_j}{2^j} \Big( \sum_{i=1}^{l_2} \frac{b_i}{3^i} + \sum_{i=l_2+1}^{m} \frac{1}{3^i} \Big) \Big). \nonumber
\end{align}
Estimating the floor function yields: 
\begin{align*}
&  \tilde{\alpha}^{(1)}_m \geq \frac{1}{B_{\boldsymbol{\tau}m}} \sum_{N=1}^{B_{\boldsymbol{\tau}m}} \Big(\sum_{j=2\tilde{k}_1^{(i-1)} +1 }^{2\tilde{k}_1^{(i)} - 1}  \sum_{i=1}^{l_2} a_jb_i \Big( \frac{N}{2^{j}3^i} - 1 \Big) + \sum_{j=2\tilde{k}_1^{(i-1)} +1 }^{2\tilde{k}_1^{(i)} - 1} \sum_{i=l_2+1}^{m} a_j \Big( \frac{N}{2^{j}3^i} -1 \Big)  \Big) \\
& - \frac{B_{\boldsymbol{\tau} m}+1}{2} \Big( \sum_{j=2\tilde{k}_1^{(i-1)} +1 }^{2\tilde{k}_1^{(i)} -1} \frac{a_j}{2^j} \Big( \sum_{i=1}^{l_2} \frac{b_i}{3^i} + \sum_{i=l_2+1}^{m} \frac{1}{3^i} \Big) \Big) \\
& = \frac{B_{\boldsymbol{\tau} m}+1}{2} \sum_{j=2\tilde{k}_1^{(i-1)} +1 }^{2\tilde{k}_1^{(i)} - 1} \sum_{i=1}^{l_2} \frac{a_j}{2^{j}} \frac{b_i}{3^i} +  \frac{B_{\boldsymbol{\tau} m}+1}{2} \sum_{j=2\tilde{k}_1^{(i-1)} +1 }^{2\tilde{k}_1^{(i)} - 1}  \sum_{i=l_2+1}^{m} \frac{a_j}{2^{j}} \frac{1}{3^i} \\
& - \frac{B_{\boldsymbol{\tau} m}+1}{2} \Big( \sum_{j=2\tilde{k}_1^{(i-1)} +1 }^{2\tilde{k}_1^{(i)} -1} \frac{a_j}{2^j} \Big( \sum_{i=1}^{l_2} \frac{b_i}{3^i} + \sum_{i=l_2+1}^{m} \frac{1}{3^i} \Big) \Big) - \Big( \sum_{i=1}^{l_2} b_i + (m-l_2) \Big) \sum_{j=2\tilde{k}_1^{(i-1)} +1 }^{2\tilde{k}_1^{(i)} -1} a_j \\
& \geq (-m -l_2) \sum_{j=2\tilde{k}_1^{(i-1)} +1 }^{2\tilde{k}_1^{(i)} -1} a_j \\
& \geq -2m \sum_{j=2\tilde{k}_1^{(i-1)} +1 }^{2\tilde{k}_1^{(i)} -1} a_j. 
\end{align*}
We get an analogue upper bound for $\tilde{\alpha}^{(1)}_m$, by estimating $ \sum_{n=\tilde{y}_m}^{\tilde{y}_m + N -1 } \chi_{J_1}(H_s(n))$ with the expression  
\begin{equation*}
\sum_{j=2\tilde{k}_1^{(i-1)} +1 }^{2\tilde{k}_1^{(i)} - 1} \sum_{i=1}^{l_2} a_jb_i \Big( \Big\lfloor \frac{N}{2^{j}3^i} \Big\rfloor + 1 \Big) + \sum_{j=2\tilde{k}_1^{(i-1)} +1 }^{2\tilde{k}_1^{(i)} - 1} \sum_{i=l_2+1}^{m} a_j \Big( \Big\lfloor \frac{N}{2^{j}3^i} \Big\rfloor +1 \Big). 
\end{equation*}
To sum up, we get:
\begin{equation*}
\abs{\tilde{\alpha}^{(1)}_{m}} \leq 2m \sum_{j=2\tilde{k}_1^{(i-1)} +1 }^{2\tilde{k}_1^{(i)} -1} a_j.
\end{equation*}
In total, all intervals of this form yield therefore a contribution of at most $l_1m$. \\ \\ 
Studying the average of the discrepancy function for an interval of the form  
\begin{equation*}
J_2 = [0,\tilde{y}^{(1)}) \times [[\tilde{y}^{(2)}]_{\tilde{k}_2^{(i)}} - 3^{-\tilde{k}_2^{(i)}},[\tilde{y}^{(2)}]_{\tilde{k}_2^{(i)}}),
\end{equation*}
we get, analogously to above, an additional contribution to $\alpha_m$ of at most $l_2 m$. In total, we thus have, an contribution of the magnitude 
\begin{equation*}
m(l_1 + l_2).
\end{equation*}
Therefore, if $l_1 + l_2 < m$, we still can derive an estimate of the form $\abs{\alpha_m} \geq c_2 m^2$ for the modified box $[0, \tilde{y}^{(1)}) \times [0, \tilde{y}^{(2)})$. 
Let now $m$ be given and $\boldsymbol{x} =(x_{1}, x_{2}) \in [0,1)^2$, arbitrary but fixed, where 
\begin{equation*}
x_{1} = \sum_{i\geq 1} \frac{a_i}{2^i}, \ a_i \in \lbrace 0, 1 \rbrace \text{ and } x_{2} = \sum_{i\geq 1} \frac{b_i}{3^i}, \ b_i \in \lbrace 0, 1, 2 \rbrace.
\end{equation*}   
Due to above considerations, we can find $\boldsymbol{y} \in \Upsilon$, which satisfies    
\begin{equation*}
\| \boldsymbol{x} - \boldsymbol{y}  \| < \sqrt{ \Big( \frac{1}{2^{\lfloor \frac{m}{2} \rfloor -1}} \Big)^2 + \Big( \frac{2}{3^{\lfloor \frac{m}{2} \rfloor -1}} \Big)^2} < \sqrt{8} \frac{1}{2^{m/2}},
\end{equation*}
and also allows to derive $\abs{\alpha_m} \geq c_2 m^2$.
\end{proof}
\end{lem}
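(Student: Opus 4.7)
The plan is to take $\Upsilon$ to consist of all box upper corners $(\tilde y^{(1)},\tilde y^{(2)})\in[0,1)^2$ obtained from the canonical choice of Lemma~4.5 by freely perturbing the leading $l_1$ binary digits of $y^{(1)}$ and the leading $l_2$ ternary digits of $y^{(2)}$, while keeping the trailing digits equal to the canonical pattern $0101\ldots 01$ (length $2m$, in base $2$) and $11\ldots 1$ (length $m$, in base $3$). The two parameters $l_1,l_2$ must be large enough to yield the approximation property, but their sum must be strictly smaller than $m$ so that the modifications contribute only a subleading correction to $\alpha_m$. Concretely, I would set $l_1=l_2=\lfloor m/2\rfloor-1$.

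For the approximation statement, given $\boldsymbol{x}=(x_1,x_2)$ with base-$2$ expansion of $x_1$ and base-$3$ expansion of $x_2$, I would construct $\boldsymbol{y}\in\Upsilon$ by copying the first $l_1$ binary digits of $x_1$ into $\tilde y^{(1)}$ and the first $l_2$ ternary digits of $x_2$ into $\tilde y^{(2)}$. This makes $x_1$ and $\tilde y^{(1)}$ agree in the first $l_1$ binary digits, so $|x_1-\tilde y^{(1)}|<2^{-l_1+1}$, and similarly $|x_2-\tilde y^{(2)}|<2\cdot 3^{-l_2}$; adding these in quadrature gives the required bound $\|\boldsymbol{x}-\boldsymbol{y}\|<\sqrt{8}\,2^{-m/2}$ for the chosen $l_1,l_2$.

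For the lower bound on $|\alpha_m|$, I would write the modified box $[0,\tilde y^{(1)})\times[0,\tilde y^{(2)})$ as the canonical box of Lemma~4.5 plus a disjoint union of ``extra'' rectangles, one for each nonzero perturbed digit of $\tilde y^{(1)}$ or $\tilde y^{(2)}$. These extra rectangles split naturally into the two families $J_1$ (coming from a perturbed binary digit) and $J_2$ (from a perturbed ternary digit) exhibited in the excerpt. Denoting by $\tilde\alpha_m^{(1)}$ the contribution to $\alpha_m$ from one such $J_1$, I would estimate the inner count $\sum_{n=\tilde y_m}^{\tilde y_m+N-1}\chi_{J_1}(H_2(n))$ by expanding $\chi_{J_1}$ over its base-$b_i$ digit products and using the Halton-point counting identity $\#\{n<N:\phi_{b_i}(n)\in[[y_i]_{r_i},[y_i]_{r_i}+b_i^{-r_i})\}=\lfloor N/b_i^{r_i}\rfloor+\{0,1\}$ from Lemma~4.1, together with the Chinese remainder argument of Lemma~4.2. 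Sandwiching $\lfloor\cdot\rfloor$ by $\cdot$ and $\cdot-1$ on each factor, the leading volume terms cancel against $N\lambda_2(J_1)$ and only an $O(m)$ remainder per digit survives, giving $|\tilde\alpha_m^{(1)}|\leq 2m\sum_j a_j$ per rectangle.

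Summing over all perturbed digits of both coordinates produces a total deviation of magnitude at most $m(l_1+l_2)$. With $l_1+l_2<m$ this is $o(m^2)$, so $|\alpha_m|\geq c_2 m^2$ survives from the unperturbed part via Lemma~4.5. The main technical obstacle will be carrying out this sandwich estimate uniformly for all $1\le N\le B_{\boldsymbol{\tau}m}$ and all admissible digit perturbations, so that the error constant is truly independent of which $\boldsymbol{y}\in\Upsilon$ is chosen; everything else is a careful bookkeeping of the $m(l_1+l_2)$ bound against the $c_2 m^2$ main term of $\alpha_m$.
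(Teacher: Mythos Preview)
Your proposal is correct and follows essentially the same route as the paper: define $\Upsilon$ by freely perturbing the first $l_1$ binary and $l_2$ ternary digits, bound the contribution of the ``extra'' rectangles $J_1,J_2$ via the floor-function sandwich $\lfloor N/(2^j3^i)\rfloor \in [N/(2^j3^i)-1,\,N/(2^j3^i)]$ to get a total deviation of order $m(l_1+l_2)$, and take $l_1=l_2=\lfloor m/2\rfloor-1$ so that $l_1+l_2<m$. One small imprecision worth fixing: the modified box is not literally ``the canonical box plus extras'', since perturbing may also \emph{remove} canonical addends (e.g.\ when some $a_{2k}=0$ or $b_k=0$ in the leading block); the surviving ``main part'' is therefore handled by Corollary~5.2 rather than Lemma~4.5 directly, which is exactly why the hypothesis of the lemma reads ``as in Corollary~5.2''.
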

Based on the previous lemma, we are in the position to prove Theorem 5, which gives a lower bound for the discrepancy for a specific $N$ and not just for the average. \\\\
\textbf{Proof of Theorem 5:} \\
Fix an $m$, which satisfies the condition of Lemma 5.1 and recall $N_m = N + \tilde{y}_m$, as in the proof of Theorem 2. Consider now squares $Q_i \subseteq [0,1)^2$ of side length $\frac{2 \sqrt{8} }{2^{m/2}}$. Due to Lemma 5.1, we know that each such square contains elements of the set $\Upsilon$ (defined as in Lemma 5.1). We partition $[0,1)^2$ into $\frac{2^m}{32}$ such squares $Q_i$. Choose, for each $Q_i$, $\boldsymbol{y}_i \in Q_i \cap \Upsilon$. For some fixed $\boldsymbol{y}_i$, we have 
\begin{equation}\label{eq:eq100}
\abs{\alpha_m(\boldsymbol{y}_i)}  \geq c_2 m^2.
\end{equation}   
Let $c_2>0$ be small enough, such that this estimate holds for all other choices $\boldsymbol{y}_j \in Q_j \neq Q_i$ as well. \\
Note, that we always have $\abs{\alpha_m} \leq c m^2$ for a fixed constant $c>0$, since 
\begin{equation*}
D^{*}((H_2(n))_{n=1}^N) \leq c \frac{(\log N)^2}{N}, \text{ for all } N.
\end{equation*}
Now, we claim that the number of $N$s with $1 \leq N \leq B_{\boldsymbol{\tau}m}$ and 
\begin{equation*}
\abs{\Delta(\boldsymbol{y}_i, (H_2(n))_{n=1}^{N_m})} < \frac{c_2}{2} m^2
\end{equation*}
is at most $\kappa B_{\boldsymbol{\tau}m}$, with $\kappa :=\frac{c-c_2}{c - c_2/2}$. \\
Suppose the number of $N$s with $1 \leq N \leq B_{\boldsymbol{\tau}m}$ and 
\begin{equation*}
\abs{\Delta(\boldsymbol{y}_i, (H_2(n))_{n=1}^{N_m})} < \frac{c_2}{2} m^2
\end{equation*}
would be larger than $\kappa B_{\boldsymbol{\tau}m}$. Then, we would have 
\begin{equation*}
\abs{\alpha_m(\boldsymbol{y}_i) B_{\boldsymbol{\tau}m}} < \kappa B_{\boldsymbol{\tau}m} \frac{c_2}{2}m^2 + (1-\kappa)B_{\boldsymbol{\tau}m} c m^2 = c_2 B_{\boldsymbol{\tau}m} m^2, 
\end{equation*} 
which is a contradiction to inequality (\ref{eq:eq100}). \\ 
Therefore, the number of $N$s with $1 \leq N \leq B_{\boldsymbol{\tau}m}$ and 
\begin{equation*}
\abs{\Delta(\boldsymbol{y}_i, (H_2(n))_{n=1}^{N_m})} \geq \frac{c_2}{2} m^2
\end{equation*}
is at least $(1-\kappa)B_{\boldsymbol{\tau}m} = \frac{c_2}{2c - c_2} B_{\boldsymbol{\tau}m}$. \\ \\
To sum up, we have $\frac{2^m}{32}$ squares $Q_i$, and for each of them, we have identified $(1-\kappa)B_{\boldsymbol{\tau}m}$ distinct values for $N$, $1 \leq N \leq B_{\boldsymbol{\tau}m}$, which give a sufficiently large discrepancy. Thus, in total we have identified $\frac{2^m}{32}(1-\kappa)B_{\boldsymbol{\tau}m}$ many $N$ and this implies that at least one of those $N$ is identified at least $\frac{2^m}{32}(1-\kappa)$-times. Let $N_0$ be an $N$ with this certain multiplicity. Further, this means that there exist at least $\frac{2^m}{32}(1-\kappa)$ distinct $\boldsymbol{y}_i \in \cup_i \ Q_i \cap \Upsilon$, such that 
\begin{equation*}
\abs{\Delta(\boldsymbol{y}_i, (H_2(n))_{n=1}^{N_{m}^{(0)}})} \geq \frac{c_2}{2} m^2,
\end{equation*}
where $N_{m}^{(0)}:= N_0 + \tilde{y}_m$. Note, that the union of all squares $Q_i$ containing the $\boldsymbol{y}_i$ with this property, forms the set $\Lambda_{N_0}$ and therefore $\lambda_2(\Lambda_N) \geq 1 - \kappa$. It remains to verify, that for all $\boldsymbol{x} \in \Lambda_{N_0}$ there exists a $\boldsymbol{y} \in [0,1)^2$ having a distance less than $\sqrt{8} \frac{1}{N^{\frac{1}{14}}}$. Since $1 \leq N_0 \leq B_{\boldsymbol{\tau}m}$, the claim immediately follows by Lemma 5.1 and the estimate $\tilde{y}_m + B_{\boldsymbol{\tau}m} < 2^{7m}$. \qed
\begin{rem}
We note, that the considerations of this section can also be adopted to an arbitrary dimension $s>2$. For ease of notation, we have only presented them in the two-dimensional case for the bases $b_1=2$ and $b_2=3$. 
\end{rem} \noindent \\ 
\textbf{Acknowledgements.} We would like to thank our supervisor Gerhard Larcher for his valuable comments, suggestions and his general assistance during the writing of this paper.

\textbf{Author’s Addresses:} \\ 
Lisa Kaltenböck and Wolfgang Stockinger, Institut für Finanzmathematik und Angewandte Zahlentheorie, Johannes Kepler Universität Linz, Altenbergerstraße 69, A-4040 Linz, Austria. \\ \\ 
Email: lisa.kaltenboeck(at)jku.at, wolfgang.stockinger(at)jku.at 
\end{document}